\documentclass[11pt,
]{article}
\pdfoutput=1
\usepackage[english]{babel}
\usepackage{amsthm,amsmath,amssymb,amsfonts}
\usepackage[alphabetic]{amsrefs}
\usepackage{mathtools} 
\usepackage{thm-restate}
\usepackage{graphicx}
\usepackage[colorlinks=false]{hyperref}
\usepackage{setspace}
\makeatletter
\def\th@remark{%
  \thm@headfont{\bfseries}%
  \normalfont 
  \thm@preskip\topsep \divide\thm@preskip\tw@
  \thm@postskip\thm@preskip
}
\makeatother


\newtheorem{theorem}{Theorem}[section]
\newtheorem{corollary}[theorem]{Corollary}
\newtheorem{lemma}[theorem]{Lemma}
\theoremstyle{remark}
\newtheorem*{remark}{Remark}
\newtheorem*{question}{Question}

\theoremstyle{definition}


\newenvironment{customcor}[1]
{\innercustomcor}
{\endinnercustomcor}
\hyphenation{Mura-kami}
\hyphenation{Kawa-uchi}
\hyphenation{Naka-nishi}
\newcommand{\ds}{\displaystyle}

\newcommand{\Z}{\ensuremath{\mathbb{Z}}}

\newcommand{\V}{\ensuremath{\mathcal{V}}}

\newcommand{\Addresses}{{
  \bigskip
  \footnotesize
  \textsc{
Dept. of Mathematics \& Statistics, McMaster University\\
1280 Main Street West
Hamilton, Ontario,
Canada L8S 4K1
  }\par\nopagebreak
      \textit{E-mail address}: \texttt{chenj293@mcmaster.ca}
  }}
\usepackage{color}
\title{On the algebraic Gordian distance}
\author{Jie CHEN
}
\date{}
\begin{document}
\newcounter{minutes}\setcounter{minutes}{\time}
\divide\time by 60
\newcounter{hours}\setcounter{hours}{\time}
\multiply\time by 60
\addtocounter{minutes}{-\time}
\def\thefootnote{}
\footnotetext{
\texttt{\tiny File:~\jobname .tex, printed: \number\year-\number\month-\number\day,
          \thehours:\ifnum\theminutes<10{0}\fi\theminutes}
}
\makeatletter
\def\thefootnote{\@arabic\c@footnote}
\newcommand{\subjclass}[2][1991]{%
  \let\@oldtitle\@title%
  \gdef\@title{\@oldtitle\footnotetext{#1 \emph{Mathematics subject classification.} #2}}%
}
\newcommand{\keywords}[1]{%
  \let\@@oldtitle\@title%
  \gdef\@title{\@@oldtitle\footnotetext{\emph{Key words and phrases.} #1.}}%
}
\makeatother
\subjclass[2010]{57M25; 57M27}
\keywords{knot; Seifert matrix; algebraic unknotting operation;
    $S$-equivalence; Blanchfield pairing}
\maketitle
\begin{abstract}
Using Blanchfield pairings, 
we show
that two Alexander polynomials cannot be 
realized by  a pair of matrices with Gordian distance one if a corresponding
quadratic equation does not have
an
integer solution.
We also give an example of how our results help in calculating the Gordian distances,
algebraic Gordian distances and polynomial distances.
\end{abstract}
\section{Introduction}
A \emph{knot} is an oriented circle embedded in the three-sphere $S^3$,  taken up to isotopy, 
and a knot diagram is an oriented circle immersed in $S^2$ with at worst double points, which are crossings and where one records over and under crossing information.
Any knot diagram can be converted to a diagram for the trivial knot by crossing changes, and thus crossing change is an unknotting operation for knots. 
The Gordian distance $d_G(K,K')$ between two knots $K$ and $K'$ is the minimal number of crossing changes needed to turn $K$ into $K'$, 
and the unknotting number of $K$ is defined by $u(K)=d_G(K,O)$, where $O$ is the trivial knot.

Murakami defined the algebraic unknotting operation in \cite{mura90} in terms of the corresponding Seifert matrices. 
Let $V$ be a Seifert matrix, and let $[V]$ be its $S$-equivalence class.
The algebraic Gordian distance $d^a_G([V],[V'])$ between two $S$-equivalence classes of Seifert matrices
is the minimal number of algebraic unknotting operations 
needed to turn a Seifert matrix in $[V]$ into a Seifert matrix in $[V']$.
The algebraic unknotting number is defined by $u_a([V])=d^a_G([V],[\varnothing])$, where
$[\varnothing]$ denotes the $S$-equivalence class of the empty matrix.

Seifert showed that a Laurent polynomial $\Delta \in \Z[t,t^{-1}]$ is the Alexander polynomial of knot if and only if $\Delta(t^{-1}) = \Delta(t)$ and $\Delta(1)=1$; see \cite{seifert-a}. Given two such polynomials $\Delta, \Delta'$, Kawauchi defined the distance between them by setting
$$\ds    \rho(\Delta,\Delta')=\min_{K,K'} d_G(K,K'),$$
where $K$ and $K'$ are knots with Alexander polynomials $\Delta_K =\Delta$
and $\Delta_{K'} = \Delta'$; see \cite{kawa12}.
It is known that $\rho(\Delta,\Delta')\in \left\{ 0,1,2 \right\}$, and that 
$\rho(\Delta,\Delta')=0\Leftrightarrow \Delta=\Delta'$.
There are many examples of Alexander polynomials $\Delta$, $\Delta'$ with 
$\rho(\Delta,\Delta')=1$. 
Jong posed the problem of finding examples of Alexander polynomials $\Delta$ and $\Delta'$ with
$\rho(\Delta,\Delta')= 2$; see \cites{kawa12,jong1,jong2,jong3}.

Let $\Lambda$ be the Laurent polynomial ring $\Z[t,t^{-1}]$.
For a polynomial $c\in\Lambda$, put $\bar{c}=c|_{t=t^{-1}}$.
In \cite{kawa12}*{Theorem~1.2, p.949} Kawauchi  proved the following theorem:
\begin{theorem}[Kawauchi]  
    If $u(K)=d_G(K,K')=1$, then there exists $c\in\Lambda $ such that 
    $\pm \Delta_{K'}\equiv c\bar{c}\pmod{\Delta_K}$.
    \label{thm:kawa}
\end{theorem}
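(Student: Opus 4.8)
The plan is to translate the statement into one about Blanchfield pairings over $\Lambda$ and then to combine two applications of a surgery formula for a single crossing change. First, since a crossing change is an algebraic unknotting operation, we have $u_a(K)=d^a_G([V],[\varnothing])\le d_G(K,O)=u(K)=1$; by the known characterisation of $S$-equivalence classes of algebraic unknotting number at most one, this forces the Alexander module $\mathcal A=H_1(\widetilde X_K;\Lambda)$ of $K$ to be cyclic, $\mathcal A\cong\Lambda/(\Delta_K)$, with Blanchfield pairing represented by a single fraction: there is $b\in\Lambda$, a unit modulo $\Delta_K$ with $\bar b\equiv b\pmod{\Delta_K}$, such that $\mathrm{Bl}_K(x,y)\equiv x\bar y\, b/\Delta_K\bmod\Lambda$ for $x,y\in\Lambda/(\Delta_K)$. (Here $\Delta_K$ is taken symmetric, and the bar involution descends to $\Lambda/(\Delta_K)$ because the ideal $(\Delta_K)$ is bar-stable.)

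The key step is a surgery formula for an arbitrary single crossing change on $K$. Suppose such a change takes $K$ to a knot $K''$; it is realised by $\varepsilon$-framed surgery ($\varepsilon\in\{+1,-1\}$, the framing taken with respect to a Seifert surface of $\gamma$ in $S^3\setminus K$) on an unknot $\gamma\subset S^3$ with $\mathrm{lk}(\gamma,K)=0$. Since this linking number vanishes, the surgery lifts to the infinite cyclic covers: $\gamma$ has a distinguished lift $\widetilde\gamma\subset\widetilde X_K$, giving a class $z=[\widetilde\gamma]\in\mathcal A$, and both $\widetilde X_K$ and $\widetilde X_{K''}$ are obtained from the $\Z$-cover of the exterior of $K\cup\gamma$ (the one with $\mu_K\mapsto1$, $\mu_\gamma\mapsto0$) by Dehn filling the lifts of $\partial\overline{N(\gamma)}$, along $\mu_\gamma$ in the first case and along $\mu_\gamma+\varepsilon\lambda_\gamma$ in the second. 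Comparing the two fillings by a Mayer--Vietoris argument over $\Lambda$ and using Poincar\'e--Lefschetz/Blanchfield duality, one computes $\Delta_{K''}$ modulo $\Delta_K$ in terms of the self-pairing $\mathrm{Bl}_K(z,z)$: writing $z=a\cdot1$ in $\Lambda/(\Delta_K)$, so that $\mathrm{Bl}_K(z,z)\equiv a\bar a\, b/\Delta_K$, the resulting congruence has the form
\[
\pm\,\Delta_{K''}\ \equiv\ a\bar a\, b \pmod{\Delta_K}.
\]

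Now apply this twice, to two crossing circles lying in $S^3\setminus K$. Because $u(K)=1$ there is a circle $\gamma_0$ whose crossing change unknots $K$; with $K''=O$ and $\Delta_O=1$ the formula gives $\pm1\equiv a_0\bar a_0\, b\pmod{\Delta_K}$, so $a_0$ is a unit modulo $\Delta_K$ and $b\equiv\pm\, a_0^{-1}\overline{a_0^{-1}}\pmod{\Delta_K}$. Because $d_G(K,K')=1$ there is a circle $\gamma_1$ whose crossing change takes $K$ to $K'$; with $K''=K'$ the formula gives $\pm\,\Delta_{K'}\equiv a_1\bar a_1\, b\pmod{\Delta_K}$. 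Substituting the value of $b$ makes the dependence on the Blanchfield pairing disappear:
\[
\pm\,\Delta_{K'}\ \equiv\ a_1\bar a_1\cdot\big(\pm\, a_0^{-1}\overline{a_0^{-1}}\big)\ =\ \pm\,\big(a_1 a_0^{-1}\big)\,\overline{\big(a_1 a_0^{-1}\big)} \pmod{\Delta_K}.
\]
Taking $c\in\Lambda$ to be any lift of $a_1 a_0^{-1}\in\Lambda/(\Delta_K)$ yields $\pm\Delta_{K'}\equiv c\bar c\pmod{\Delta_K}$, which is the assertion.

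The main obstacle is the surgery formula in the second paragraph. Carrying out the Mayer--Vietoris and duality computation over $\Lambda$ — rather than merely over $\Q(t)$, where torsion phenomena are invisible — requires keeping track of the possibly non-torsion part of the homology of the link-exterior cover and of how the duality pairing interacts with the curves $\mu_\gamma$ and $\lambda_\gamma$ on $\partial\overline{N(\gamma)}$ (which meet once there), and fixing the exact sign and the precise manner in which $\mathrm{Bl}_K(z,z)$ enters. Granting that congruence, the remaining ingredients — the structure theory used in the first paragraph and the purely algebraic manipulation of the last — are routine; note that it is the cancellation of the Blanchfield generator $b$ between the two applications that accounts for the clean norm form $c\bar c$, with no auxiliary invariant, in the conclusion.
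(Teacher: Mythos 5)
Your overall architecture is the same as the paper's: reduce to the statement that the (algebraic) unknotting-number-one hypothesis makes the Blanchfield pairing cyclic and representable by a single fraction, show that a crossing change from $K$ to $K''$ produces a class $z$ with self-pairing $\pm\Delta_{K''}/\Delta_K \bmod \Lambda$, and then eliminate the generator of the pairing to get the norm form $c\bar c$. (Your trick of applying the crossing-change formula to the unknotting circle itself to pin down $b\equiv\pm a_0^{-1}\overline{a_0^{-1}}$ is a pleasant variant of the paper's direct appeal to Murakami's result that $u_a=1$ forces the pairing to be $(\pm 1/\Delta_K)$ on a generator; either works.)

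The genuine gap is exactly where you flag it: the ``surgery formula'' $\pm\Delta_{K''}\equiv a\bar a\,b\pmod{\Delta_K}$ is the entire mathematical content of the theorem, and you do not prove it --- you only describe a Mayer--Vietoris/duality strategy over $\Lambda$ and then write ``granting that congruence.'' Without that step the argument is a reduction, not a proof. The paper closes this gap without any covering-space topology: a crossing change replaces a Seifert matrix $W'$ of $K''$ by the enlargement $\left(\begin{smallmatrix}\varepsilon&0&0\\ 1&x&M\\ 0&N^T&W'\end{smallmatrix}\right)$, and the self-pairing of the new basis element is the $(1,1)$-entry of $(t-1)(W-tW^T)^{-1}$, computed via the adjugate; a one-line cofactor expansion of $\det(W-tW^T)$ then yields $\beta(a_1,a_1)\equiv\varepsilon(\Delta_{K''}-\Delta_K)/\Delta_K\equiv\varepsilon\Delta_{K''}/\Delta_K\pmod\Lambda$. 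If you want to complete your write-up, the cleanest fix is to replace the proposed Mayer--Vietoris computation by this Seifert-matrix calculation (using that a geometric crossing change induces an algebraic unknotting operation on Seifert matrices, together with the Friedl--Powell formula $(v,w)\mapsto v^T(t-1)(V-tV^T)^{-1}\bar w$ for the Blanchfield pairing); carrying out your duality argument honestly over $\Lambda$ would be substantially harder and is not needed.
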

In this paper, by considering the corresponding algebraic unknotting operations,
we obtain the following result.
\begin{customcor}{\ref{col:alg}}
    If $u_a([V])=d_G^a([V],[V'])=1$,
then there exists $c\in \Lambda$ such that 
$\pm \Delta_{V'}\equiv c\bar{c}\pmod{\Delta_V}$.
\end{customcor}
The proof of Corollary~\ref{col:alg} will be given in Section~4. Note that
Corollary~\ref{col:alg} implies
Theorem~\ref{thm:kawa}, for if $K$ and $K'$ are knots with Seifert matrices $V$ and $V'$, respectively, then $d_G(K,K')=1$ implies $d_G^a([V],[V'])\le 1$. Note further that
the converse may not hold, and that Corollary~\ref{col:alg} does not require any geometric assumption on the unknotting number.
Further, Corollary~\ref{col:alg} gives some new results that
cannot be derived from Theorem~\ref{thm:kawa}.
For instance, the following corollary gives an obstruction to
$d^a_G([V],[V'])=1$.
\begin{customcor}{\ref{thm_main3}}
    Suppose that $V$ and $V'$ are Seifert matrices with Alexander polynomials
    $\Delta_V$ and $\Delta_{V'}$, respectively.
    Suppose further that $\Delta_V=h(t+t^{-1})+1-2h$ for $|h|$ a prime or $1$ and that
    $\Delta_{V'}\equiv d \pmod{\Delta_{V}}$, where $0 \neq d\in \Z$. 
    If $u_a([V])=1$ and if
    $h^2x^2+y^2+(2h-1)xy=\pm d $ admits no integer solutions,
    then the algebraic Gordian distance $d^a_G([V],[V'])> 1$.
\end{customcor}
The following corollary,
also derived from Corollary~\ref{col:alg}, gives a new solution to Jong's problem.
\begin{customcor}{\ref{thm_main4}}
    The Alexander polynomial distance
    $\rho(t-1+t^{-1},\Delta)=2$ if
$\Delta\equiv  2+4m \pmod{t-1+t^{-1}}$
for some $m\in \Z$.
\end{customcor}
In Section~5, we apply Corollary~\ref{thm_main4} to show
that the Gordian distance $ d_G(K_1,K_2)\ge 2$
for any pair of knots $K_1$ and $K_2$ with
$\Delta_{K_{1}}=\Delta_{3_{1}}$ and $\Delta_{K_{2}}=\Delta_{9_{25}}$. 

The remainder of this paper is organized as follows.
In Section~2, we review preliminary material.
In Section~3, we present some results on Seifert matrices.
In Section~4,
we prove Theorem \ref{thm_main2}, our main result, which 
is an improvement to Theorem \ref{thm:kawa} and gives new answers to Jong's problem.
In the final section, we present an example illustrating how to calculate 
various distances in knot theory.

\section{Preliminaries}

A \emph{Seifert matrix} $V$
is a $2n\times 2n$ integer matrix, satisfying $\det (V-V^T)=1$.
Two Seifert matrice $V$ and $W$ are said to be \emph{congruent}
if $W=PVP^T$ for a unimodular matrix 
    $P$. A Seifert matrix $W$ is called an \emph{enlargement} of $V$ if
\[
    W=   \begin{pmatrix}
        0 &0 &0\\
        1 &x &M\\
        0 &N^T &V
    \end{pmatrix}
    \text{~~~~~~~~or~~~~~~~~}
   \begin{pmatrix}
        0 &1 &0\\
        0 &x &M\\
        0 &N^T &V
    \end{pmatrix},
\]
where $M$ and $N$ are row vectors. 
    In this case, we also say that $V$ is a \emph{reduction} of $W$.
    Two Seifert matrices are said to be \emph{$S$-equivalent} if one can be obtained from the other by a sequence of congruences, enlargements, and reductions. Note that any two Seifert matrices of the same knot are $S$-equivalent \cite{levine}.
    For a given
    Seifert matrix $V$, we use $[V]$ to denote its
    \emph{$S$-equivalence class,} which consists of all Seifert matrices $S$-equivalent to $V$; see \cites{seifert,trotter73}.

  Motivated by the unknotting operation,
the \emph{algebraic unknotting operation}
    assigns a Seifert matrix $W$ to
    $     \begin{pmatrix}
              \varepsilon &0 &0\\
               1 &x &M\\
               0 &N^T &W
           \end{pmatrix}
       $ for $\varepsilon=\pm 1$
       and $x\in \Z$, where $M$ and $N$ are row vectors \cite{mura90}.

The unknotting operation 
can be seen as adding a twist to a knot, turning Figure~\ref{fig:twist}-a into
Figure~\ref{fig:twist}-b, which is equivalent to \ref{fig:twist}-c.
The twist may fall into two types,
corresponding to
two types of the algebraic unknotting operations.
To distinguish them, set $\varepsilon=1$ for Figure~\ref{fig:twist}-b and
$\varepsilon=-1$ otherwise.
We call the corresponding operation an $\varepsilon$-unknotting operation.
Set $W$ to be a Seifert matrix of Figure~1-d.
We add a twist to the knot in Figure~1-d, so that 
the result is the knot in Figure~1-e or Figure~1-f.
Set $\alpha$ and $\beta$  to be the two new generators of the first homology group of the 
Seifert surfaces in Figure~1-e,f which do not appear in Figure~1-d.
By choosing the direction of $\alpha$ such that $\operatorname{lk}(\alpha,\beta^+)=1$,
we have $\operatorname{lk}(\alpha,\alpha^+)=\varepsilon$ and $\operatorname{lk}(\beta,\beta^+)=x$.
The Seifert matrices of Figure~\ref{fig:twist}-e,f coincide with the result of
the algebraic unknotting operation.

        \begin{figure}[h]
            \centering
            \includegraphics[width=0.32\textwidth]{./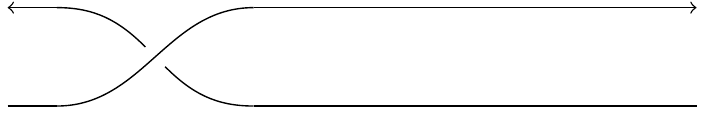}
    \includegraphics[width=0.32\textwidth]{./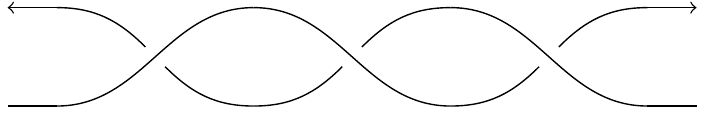}
    \includegraphics[width=0.32\textwidth]{./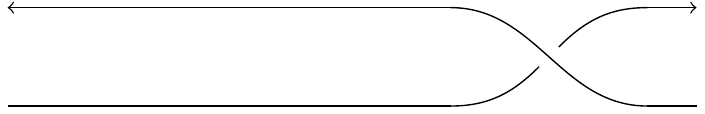}
    \\
    \hspace{0.\textwidth}(a)
    \hspace{0.32\textwidth}(b)
    \hspace{0.3\textwidth}(c)
    \hspace{0.15\textwidth}\\
    ~\\
    \includegraphics[width=0.32\textwidth]{./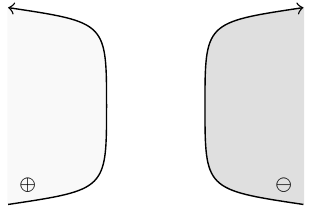}
    \includegraphics[width=0.33\textwidth]{./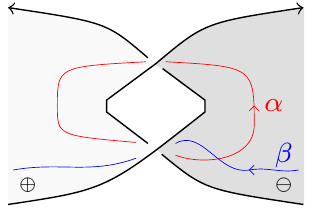}
    \includegraphics[width=0.33\textwidth]{./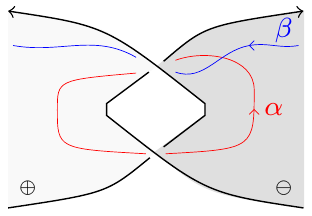}
    \\
    \hspace{0.\textwidth}(d)
    \hspace{0.32\textwidth}(e)
    \hspace{0.3\textwidth}(f)
    \hspace{0.15\textwidth}\\
             \caption{Unknotting operation}
            \label{fig:twist}
        \end{figure}

\emph{The Gordian distance} \cite{mura85}
    between 
 $K$ and $K'$, denoted by $d_G(K,K')$, is the minimal number of crossing changes
 needed to turn $K$ into $K'$.
The \emph{unknotting number} of $K$, denoted by $u(K)$, is defined by $u(K)=d_G(K,O)$,
where $O $ is the trivial knot.
    Let $\mathcal{V}$ and $\mathcal{V'}$ be two $S$-equivalence classes.
    For any two Seifert matrices $V $ and $V'$ such that $V\in\mathcal{V}$ and $V'\in\mathcal{V'}$,
there exist a sequence of algebraic unknotting operations and $S$-equivalences transforming
$V$ to $V'$.
The \emph{algebraic Gordian distance} between $\mathcal{V}$ and $\mathcal{V'}$,
denoted by $d_G^a(\mathcal{V},\mathcal{V'})$,
is the minimal number of algebraic unknotting operations in such a sequence
\cites{mura90,fogel}.

Clearly, $d_G^a(\V,\V')=d_G^a(\V',\V)$ and $d_G^a(\V,\V')=0$
if and only if $\V=\V'$, hence $d_G^a$ defines a metric on the space of $S$-equivalence class
of Seifert matrices.

The following lemma, first proved in \cite{mura90}*{Proposition~2, p.286}, is an immediate consequence of the following observation.
Given any sequence of unknotting operations for a knot, one can construct a
sequence of algebraic unknotting operations for its Seifert matrix.
\begin{lemma}[Murakami]
    For two knots $K_1$ and $K_2$ with Seifert matrices $V_1$ and $V_2$, respectively,
    we have
    $d_G(K_1,K_2)\ge d_G^a([V_1],[V_2])$.
    \label{lem_mura2}
\end{lemma}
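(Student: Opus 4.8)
The plan is to prove the inequality in two stages: first show that a single crossing change alters the $S$‑equivalence class of the associated Seifert matrix by at most one algebraic unknotting operation, and then concatenate such steps along a shortest sequence of crossing changes from $K_1$ to $K_2$.

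For the reduction, I would write $m=d_G(K_1,K_2)$ and fix knots $K_1=J_0,J_1,\dots,J_m=K_2$ with $d_G(J_{i-1},J_i)=1$ for every $i$. It suffices to prove the following \emph{claim}: if $d_G(J,J')=1$, then $J$ and $J'$ admit Seifert matrices $A$ and $B$ such that one of $A,B$ is an algebraic unknotting enlargement of the other. Granting the claim, applying it to each pair $(J_{i-1},J_i)$ produces Seifert matrices related by one algebraic unknotting operation; since any two Seifert matrices of a fixed knot are $S$‑equivalent \cite{levine}, all the matrices chosen for a given $J_i$ have class $[V_{J_i}]$, so one obtains a sequence of $m$ algebraic unknotting operations, interspersed with $S$‑equivalences, carrying a Seifert matrix of $K_1$ to one of $K_2$; hence $d_G^a([V_1],[V_2])\le m=d_G(K_1,K_2)$. (Equivalently, the claim gives $d^a_G([V_{J_{i-1}}],[V_{J_i}])\le 1$ and one finishes with the triangle inequality for the metric $d_G^a$.)

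To prove the claim I would use the classical fact that a crossing change can be realized by a full $\varepsilon$‑twist ($\varepsilon=\pm1$, according to the sign of the crossing, as in the two cases of Figure~\ref{fig:twist}) along an embedded $2$‑disk $D\subset S^3$ meeting $J$ transversely in two points of opposite sign, with $J'$ the image of $J$ under this twist. Starting from any Seifert surface for $J$ and applying a general‑position isotopy together with the standard innermost‑circle surgeries, I would arrange a Seifert surface $F$ for $J$ with $\partial D\cap F=\varnothing$ and $F\cap D$ a single properly embedded arc $\gamma$ joining the two points of $J\cap D$; near $\gamma$, $F$ is a band whose two sheets pass through $D$. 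The $\varepsilon$‑twist along $D$ twists this band, producing an immersed surface with one clasp singularity, which I remove by attaching one small tube. The result is an embedded Seifert surface $F'$ for $J'$ with $H_1(F')\cong H_1(F)\oplus\Z\alpha\oplus\Z\beta$, $\beta$ the core of the new tube and $\alpha$ a small loop linking it once — exactly the local configuration of Section~2 and Figure~\ref{fig:twist}. Orienting $\alpha$ so that $\operatorname{lk}(\alpha,\beta^+)=1$ gives $\operatorname{lk}(\alpha,\alpha^+)=\varepsilon$ and $\operatorname{lk}(\beta,\beta^+)=x$ for some $x\in\Z$, while $\alpha$ is unlinked from the pushoffs of a basis of $H_1(F)$, so in the basis $\{\alpha,\beta\}\cup(\text{basis of }H_1(F))$ the Seifert matrix of $F'$ has the block form
\[
V_{F'}=\begin{pmatrix}\varepsilon&0&0\\ 1&x&M\\ 0&N^{T}&V_{F}\end{pmatrix},
\]
that is, $V_{F'}$ is obtained from $V_F$ by a single algebraic unknotting operation \cite{mura90}. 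If the twist lowers rather than raises the genus, the construction runs in the opposite direction and $V_F$ is an algebraic unknotting enlargement of $V_{F'}$; in either case the claim holds.

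The hard part will be the geometric normalisation inside the proof of the claim: realizing the crossing change as a $\pm1$‑twist along $D$ and massaging a Seifert surface so that it meets $D$ in exactly one arc, handling the circle and superfluous arc components of $F\cap D$ by innermost‑disk surgeries. Once that position is achieved, identifying the resulting Seifert matrix with the displayed algebraic‑unknotting form — in particular verifying that the $\alpha$‑row and $\alpha$‑column are $(\varepsilon,0,\dots,0)$ and $(\varepsilon,1,0,\dots,0)^{T}$ — is the local linking‑number bookkeeping already recorded in Section~2.
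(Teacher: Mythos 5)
Your global structure --- factor a minimal sequence of crossing changes into single steps, show each step changes the class of the Seifert matrix by at most one algebraic unknotting operation, and concatenate using the $S$-equivalence of all Seifert matrices of a fixed knot \cite{levine} together with the triangle inequality for $d_G^a$ --- is exactly the observation the paper records before the lemma (the paper itself offers no more than the citation to \cite{mura90}*{Proposition~2} and the discussion around Figure~\ref{fig:twist}). The gap is in your proof of the one-step claim: the normalization you impose is incompatible with the conclusion you draw from it. If you arrange $\partial D\cap F=\varnothing$ and $F\cap D=\gamma$ a single arc joining the two points of $J\cap D$, then $F$ meets the twisting solid cylinder $D\times[-1,1]$ in a single strip $\gamma\times[-1,1]$. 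The full $\varepsilon$-twist carries this embedded strip to an embedded strip, and since $F$ misses the side annulus $\partial D\times[-1,1]$, the twisted piece glues back to $F\setminus(D\times[-1,1])$ along the top and bottom disks, where a full $2\pi$ rotation is the identity. So no clasp singularity appears, no tube is attached, and $H_1(F')\cong H_1(F)$: the effect on the Seifert matrix is the same-size modification $V\mapsto V+\varepsilon vv^{T}$ (with $v$ the vector of intersection numbers of the basis curves with $\gamma$), \emph{not} the $(2n+2)\times(2n+2)$ enlargement that defines the algebraic unknotting operation. A sanity check: for the crossing change turning the unknot into the trefoil the genus must increase, so the enlargement cannot arise from a configuration in which the twist preserves the homeomorphism type of the spanning surface.

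To repair this you must either prove separately that $V+\varepsilon vv^{T}$ lies one algebraic unknotting operation away from $[V]$ --- a nontrivial algebraic statement that is essentially the content of \cite{mura90}*{Proposition~2} itself, so assuming it would be circular --- or use the other normalization, which is the one actually depicted in Figure~\ref{fig:twist}: isotope $F$ so that it meets $D$ in two disjoint arcs, each running from one point of $J\cap D$ to $\partial D$ (push the interior of your $\gamma$ radially out past $\partial D$; circles of $F\cap D$ are still removed by innermost surgeries and arcs with both endpoints on $\partial D$ by outermost boundary compressions). In that configuration the two sheets of $F$ passing through $D$ are sheared past one another by the twist, producing the clasp you describe; resolving it by a tube genuinely adds the two generators $\alpha,\beta$ with $\operatorname{lk}(\alpha,\beta^{+})=1$, $\operatorname{lk}(\alpha,\alpha^{+})=\varepsilon$, and $\alpha$ unlinked from the pushoffs of $H_1(F)$, which is what yields the displayed block form. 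With that correction the remainder of your argument (the concatenation and the genus-lowering case) goes through.
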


The \emph{algebraic unknotting number} $u_a(\mathcal{V})$ is defined to be 
    $d_G^a(\mathcal{V},\mathcal{O})$, where $\mathcal{O}$ is the $S$-equivalence
    class of the $0\times 0$ matrix \cites{mura90}.

    The \emph{Alexander polynomial} of a Seifert matrix $V$, denoted by $\Delta_V$,
    can be calculated by $\Delta_{V}= \det\left(t^{\frac{1}{2}}V-t^{-\frac{1}{2}}V^T\right)$.
    The Alexander polynomial is a knot invariant, which means that any
    two Seifert matrices
    of a given knot have the same Alexander polynomial.
    If $V$ is a Seifert matrix of $K$, we write $\Delta_{K}=\Delta_{V}$.
Saeki proved $\ds u_a([V])=\min_{K_0}d_G(K,K_0)$, where $K_0$ is a knot
with $\Delta_{K_0}=1$; see \cite{saeki}.
We will write $u_a(V) =u_a([V])$ for convenience and set $u_a(K) =u_a(V)$.

Analogously, 
the \emph{Alexander polynomial distance} between two
Alexander polynomials $\Delta$ and $\Delta'$, 
denoted by $\rho(\Delta,\Delta')$,
is defined by
$$    \rho(\Delta,\Delta')=\min d_G(K,K'),$$
where the minimum is taken over all knots $K,K'$ with Alexander polynomials $\Delta, \Delta'$, respectively \cite{kawa12}.
Kawauchi pointed out that $1\le\rho(\Delta,\Delta')\le 2$
for $\Delta, \Delta'$ distinct;
see \cite{kawa12}*{p.954}.
Given any Alexander polynomial $\Delta$,
there exists a knot $K$ with unknotting number one such that $\Delta_K=\Delta$
\cite{kondo}*{Theorem 3, p.558}. It follows that $\rho(\Delta,1)\le 1$ for any $\Delta$,
and hence
$\rho(\Delta,\Delta')\le \rho(\Delta,1)+\rho(\Delta',1)\le 2$ for any pair of Alexander polynomials
$\Delta$ and $\Delta'$.

Kawauchi called the following question Jong's Problem \cite{kawa12}*{p.954} as 
mentioned in Jong's papers \cites{jong1,jong2,jong3}.
\begin{question}
Find Alexander polynomials $\Delta$ and $\Delta'$ such that
$\rho(\Delta,\Delta')= 2$.
\end{question}

Equivalently, this question asks when 
two Alexander polynomials cannot be realized by  knots with Gordian distance one.
In \cite{kawa12}*{Corollary~4.2, p.955}, Kawauchi gives a criterion for this, assuming that both Alexander polynomials $\Delta$ and $\Delta'$ have degree two. 
In Section~4, we will give new criteria in Corollaries~\ref{thm_main3} and \ref{thm_main4} which require that only one of the 
Alexander polynomials has degree two.

Note that  a question of
Nakanishi  asks if
$\rho(\Delta_{3_1},\Delta_{4_1})= 2$; see \cite{naka}*{p.334}.
It is answered positively by Kawauchi \cites{kawa12}.
Our result gives another method to answer it.

The study of the unknotting number and the Gordian distance is closely  related
to pairing relations of covering spaces.
Let $V$ be a Seifert matrix, i.e. $V$ is a $2n \times 2n$ integral matrix with $\det(V -V^T)=1.$ The \emph{Alexander module}, denoted by $A_V$, is defined
by $A_V=\Lambda^{2n} /(tV-V^T)\Lambda^{2n}$,
where $\Lambda=\Z[t,t^{-1}]$.
If $V$ is a Seifert matrix for  $K$, then $A_V\cong H_1(\tilde{X}(K);\Z)$,
where $\tilde{X}(K)$ is the infinite cyclic cover of the complement of $K$.
If two Seifert matrices,  $V$ and $V'$ are $S$-equivalent, then their Alexander modules
are isomorphic.
The \emph{Blanchfield pairing} of  $V$ is
a map $\beta: A_V\times A_V\longrightarrow Q(\Lambda)/\Lambda$,
which is 
a sesquilinear form, meaning
$\beta(ax,by)=a\bar{b}\beta(x,y)$, where $\bar{b}= b|_{t=t^{-1}}$ and $Q(\Lambda)$
is the field of fractions of $\Lambda$; see \cite{blanch}.
By  \cite{friedl}*{Theorem 1.3}, if $K$ is an oriented knot in $S^3$ with Seifert matrix $V$
of size $2n$, then the Blanchfield pairing is isometric to the pairing 
$$\Lambda^{2n} /(tV-V^T)\Lambda^{2n}\times\Lambda^{2n} /(tV-V^T)\Lambda^{2n}\longrightarrow
Q(\Lambda)/\Lambda
$$
given by
$(v,w)\mapsto
 v^T(t - 1)(V -tV^T )^{-1}\bar{w}$ modulo $\Lambda$.

Note that
two matrices have the same Blanchfield pairing structure,
if and only if they are $S$-equivalent
\cite{trotter73}.

There are a lot of papers on how to calculate the Gordian distance for 
two given knots. A large table of the Gordian distances
is given by Moon \cites{moon}.
However, the algebraic Gordian distance of knots is rarely studied. 
We are interested in the restrictions on their $S$-equivalence classes
when their algebraic Gordian distance is one.
We will use these restrictions to provide lower bounds for various distances in knot theory.

We now list some existing results for future use.
To detect the Gordian distance, some lower bound criteria are proved.
The signature criterion
\cites{murasugi,mura85}
is $d_G(K,K')\ge \frac{1}{2}|\sigma(K)-\sigma(K')|$, where $\sigma(K)$ is the
signature of $K$.

Murakami generalized Lickorish's result \cite{lick} on the double branched cover
and showed that
if $u(K)=d_G(K,K')=1$, then 
there exists an integer $d$
such that $$\frac{2d^2}{D(K)}\equiv\pm\frac{D(K)-D(K')}{2D(K)}\pmod{1},$$
where $D(K)$ and $D(K')$ denote the determinants of $K$ and $K'$, respectively
\cite{mura85}. We call it Murakami's obstruction.

As to the algebraic unknotting number,
we refer to the following lemma of Murakami \cite{mura90}*{Theorem~5, p.288}, which we
will use later in Section~4.
\begin{lemma}[Murakami]
\label{lem_mura}
If $u_a(K)=1$, then there exists a generator
$\alpha$ for the Alexander module of $K$
such that the Blanchfield pairing $\beta(\alpha,\alpha)=\pm \frac{1}{\Delta_K}$.
Moreover, the Blanchfield pairing is given by a $1\times 1$-matrix
$(\pm \frac{1}{\Delta_K})$.
\end{lemma}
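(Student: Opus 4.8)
The plan is to reduce the Seifert matrix to an explicit bordered normal form, read off from it that the Alexander module is cyclic, compute the self-pairing of the obvious generator via Friedl's formula, and then correct that generator by a unit of $\Lambda$ so that the self-pairing becomes exactly $\pm 1/\Delta_K$.

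First I would record the normal form. Since $d_G^a$ is built out of algebraic unknotting operations and $S$-equivalences, and the class $[\varnothing]$ is precisely the class of Seifert matrices with trivial Blanchfield pairing, unwinding the hypothesis $u_a(K)=d_G^a([V_K],[\varnothing])=1$ shows that $K$ has a Seifert matrix which is $S$-equivalent to
\[
W=\begin{pmatrix}\varepsilon & 0 & 0\\ 1 & x & M\\ 0 & N^T & V_0\end{pmatrix},\qquad \varepsilon=\pm1,\quad x\in\Z,
\]
where $M,N$ are row vectors and $V_0$ is a Seifert matrix of size $2n-2$ that is $S$-equivalent to $\varnothing$; in particular $\Delta_{V_0}=1$, so $\det(tV_0-V_0^T)=t^{\,n-1}$ is a unit of $\Lambda$. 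Because the Alexander module and the Blanchfield pairing depend only on the $S$-equivalence class, and $\Delta_{V_K}=\Delta_W$, it suffices to prove the lemma for $W$.

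Next I would establish cyclicity and compute. Reading the defining relations of $A_W=\Lambda^{2n}/(tW-W^T)\Lambda^{2n}$ off the columns of $tW-W^T$, the invertibility of $tV_0-V_0^T$ over $\Lambda$ lets me solve the last $2n-2$ relations for $e_3,\dots,e_{2n}$ as $\Lambda$-multiples of $e_2$, and the second relation then does the same for $e_1$; hence $A_W=\Lambda e_2$, and since the order ideal of a cyclic module is its zeroth Fitting ideal $(\det(tW-W^T))=(\Delta_W)$, we get $A_W\cong\Lambda/(\Delta_W)$. Plugging $v=w=e_2$ into the formula $\beta(v,w)=v^T(t-1)(W-tW^T)^{-1}\bar w$ and expanding by cofactors — where the block $V_0$ contributes only the unit $\det(V_0-tV_0^T)=t^{\,n-1}$, which cancels the matching power of $t$ in $\det(W-tW^T)=\pm t^{\,n}\Delta_W$ — yields
\[
\beta(e_2,e_2)=(t-1)\bigl[(W-tW^T)^{-1}\bigr]_{22}=\frac{\pm\varepsilon(t-1)^2}{t\,\Delta_W}.
\]

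Finally comes the key point: $e_2$ is not the right generator, but $\alpha:=p\,e_2$ is, where $p\in\Lambda$ satisfies $p(t-1)\equiv 1\pmod{\Delta_W}$. Such a $p$ exists because the ideal $(t-1,\Delta_W)$ equals $\Lambda$ (modulo $t-1$ the polynomial $\Delta_W$ becomes the unit $\Delta_W(1)=1$), and $p$ is then a unit modulo $\Delta_W$, so $\alpha$ still generates $A_W$. Applying the bar involution, which fixes $\Delta_W$, to the congruence gives $\bar p(t^{-1}-1)\equiv 1$, and multiplying the two congruences yields $p\bar p(t-1)^2\equiv -t\pmod{\Delta_W}$; substituting into $\beta(\alpha,\alpha)=p\bar p\,\beta(e_2,e_2)=\tfrac{\pm\varepsilon}{t}\cdot\tfrac{p\bar p(t-1)^2}{\Delta_W}$ and using that $\pm\varepsilon/t$ is a unit of $\Lambda$ gives $\beta(\alpha,\alpha)\equiv\pm\tfrac1{\Delta_W}=\pm\tfrac1{\Delta_K}$ modulo $\Lambda$. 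As $A_K\cong A_W$ is generated by the single element $\alpha$, the Blanchfield pairing is represented by the $1\times1$ matrix $(\pm1/\Delta_K)$, which is the remaining assertion. The one step needing genuine care is the normal form in the second paragraph — that being a single algebraic unknotting operation away from $[\varnothing]$ really forces the displayed bordered shape over some $V_0$ that is $S$-equivalent to $\varnothing$; everything after that is linear algebra over $\Lambda$ together with the unit-rescaling trick.
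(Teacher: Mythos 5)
The paper does not actually prove this lemma: it is quoted verbatim from Murakami (\cite{mura90}, Theorem~5), so there is no in-paper argument to compare against. Your proposal is a correct, self-contained reconstruction of that result, and all three of its stages check out. The cyclicity argument is right (invertibility of $tV_0-V_0^T$ over $\Lambda$ solves for $e_3,\dots,e_{2n}$, then column two solves for $e_1$, and the zeroth Fitting ideal identifies $A_W\cong\Lambda/(\Delta_W)$); the cofactor computation does give $\beta(e_2,e_2)=\mp\varepsilon(t-1)^2/(t\,\Delta_W)$ since the $(2,2)$-minor of $W-tW^T$ is block diagonal with blocks $\varepsilon(1-t)$ and $V_0-tV_0^T$; and the rescaling by $p$ with $p(t-1)\equiv 1\pmod{\Delta_W}$ (which exists because $\Delta_W(1)=1$) together with the identity $(t-1)(t^{-1}-1)=-t^{-1}(t-1)^2$ correctly converts the self-pairing to $\pm 1/\Delta_K$ while keeping $\alpha=pe_2$ a generator. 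The one point you rightly flag as needing care is the normal form, and there is a genuine (though repairable) elision there: $u_a(K)=d_G^a([V_K],[\varnothing])=1$ says only that \emph{some} matrix in one of the two classes is a bordered enlargement of a matrix in the other, so a priori the bordered matrix could lie in $[\varnothing]$ with the core in $[V_K]$, which is not the shape you analyze. This is fixed by Murakami's observation that the algebraic unknotting operation is symmetric --- if $V'$ is obtained from $V$ by an $\varepsilon$-operation, then $V$ is $S$-equivalent to a matrix obtained from $V'$ by a $(-\varepsilon)$-operation --- the same fact the paper silently relies on when it asserts $d_G^a(\V,\V')=d_G^a(\V',\V)$ and when, in the proof of Theorem~\ref{thm_main2}, it takes $W\in[V]$ to be the enlargement of $W'\in[V']$ without loss of generality. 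With that citation added, your argument is complete.
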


\section{The Seifert matrix}
In this section, we recall
the definition of the algebraic unknotting operation. We notice that the matrix 
        $ 
        \begin{pmatrix}
              \varepsilon &0 &0\\
              1 &x &M\\
               0 &N^T &W
        \end{pmatrix}
        $
        is not the only possible result of adding a twist.

\begin{lemma}
    Let $W$ be a Seifert matrix of $K$.
If an $\varepsilon$-unknotting operation relates $K$ to $K'$,
then   both  $     \begin{pmatrix}
              \varepsilon &0 &0\\
              \pm 1 &x &M\\
               0 &N^T &W
           \end{pmatrix}
       $ and 
   $     \begin{pmatrix}
              \varepsilon &\pm 1 &0\\
              0 &x &M\\
               0 &N^T &W
           \end{pmatrix}
       $ 
       are Seifert matrices of $K'$.
    \label{prop:1}
\end{lemma}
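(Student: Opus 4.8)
The plan is to obtain all four matrices from the single matrix produced by Murakami's construction, using nothing but changes of basis of the first homology of one fixed Seifert surface of $K'$.

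First I would record the starting point. By Murakami's construction recalled above (see Figure~\ref{fig:twist}), an $\varepsilon$-unknotting operation from $K$ to $K'$ produces a Seifert surface $F'$ of $K'$ obtained by attaching a twisted handle to a Seifert surface $F$ of $K$ with Seifert matrix $W$; in the basis of $H_1(F')$ given by the two new generators $\alpha,\beta$ followed by the chosen basis $f_1,\dots,f_{2n}$ of $H_1(F)$, the Seifert matrix of $F'$ is
$$A=\begin{pmatrix}\varepsilon&0&0\\ 1&x&M\\ 0&N^T&W\end{pmatrix}$$
for some $x\in\Z$ and row vectors $M,N$, where $\varepsilon=\operatorname{lk}(\alpha,\alpha^{+})$ records the type of the twist. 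In particular $A$ is a Seifert matrix of $K'$.

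Second, I would use the elementary fact that if $V$ is a Seifert matrix of a knot and $P$ is unimodular, then $PVP^{T}$ is a Seifert matrix of the \emph{same} knot: it is the Seifert matrix of the same Seifert surface read off in the new basis $e_i'=\sum_j P_{ij}e_j$ of $H_1$. (This is why one works with congruences, and not with arbitrary $S$-equivalences, which need not preserve the underlying knot.) Hence it suffices to exhibit unimodular congruences carrying $A$ to
$$\begin{pmatrix}\varepsilon&0&0\\ -1&x&M\\ 0&N^T&W\end{pmatrix},\qquad \begin{pmatrix}\varepsilon&1&0\\ 0&x&M\\ 0&N^T&W\end{pmatrix},\qquad \begin{pmatrix}\varepsilon&-1&0\\ 0&x&M\\ 0&N^T&W\end{pmatrix}.$$
For the first, reverse the orientation of $\alpha$, i.e. take $P=\operatorname{diag}(-1,1,\dots,1)$; since the rest of the first row and column of $A$ vanish, this flips the off-diagonal $1$ to $-1$ and leaves $x,M,N,W$ alone. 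For the second, use the basis change $\alpha\mapsto\alpha$, $\beta\mapsto\pm\alpha-\beta$ (the sign chosen according to $\varepsilon$), composed with $f_i\mapsto -f_i$ for all $i$; the latter negates $M$ and $N$ but fixes $W$ (its rows and columns are negated together), and a short computation shows the composite sends $A$ to the middle matrix above. The third matrix is then obtained from the second by reversing $\alpha$ once more. As every step is a unimodular congruence, all four matrices are Seifert matrices of $K'$.

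The argument is formal once the starting matrix $A$ is in hand; the only place that needs genuine care is the bookkeeping in the $\beta\mapsto\pm\alpha-\beta$ step — one must check that the induced change in the top-left $2\times 2$ block yields exactly the entries $\varepsilon,\,1,\,0,\,x$ for the chosen sign, and that the sign changes it forces on the blocks $M,N$ are precisely cancelled by negating $f_1,\dots,f_{2n}$. I expect nothing harder than this routine verification; in particular no geometric input is needed beyond Murakami's construction.
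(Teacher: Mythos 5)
Your proof is correct, but it takes a different route from the paper's. The paper argues geometrically: it exhibits two distinct Seifert surfaces for $K'$ (Figure~\ref{fig:matrix}-a and \ref{fig:matrix}-b), one realizing each block form, and disposes of the sign $\pm1$ by reversing the orientation of the new generator $\alpha$. You instead fix the single surface and matrix $A$ coming from Murakami's construction and produce the other three matrices by unimodular changes of basis of $H_1$ of that one surface. Your key computation checks out: with
$P=\bigl(\begin{smallmatrix}1&0&0\\ \varepsilon&-1&0\\ 0&0&-I\end{smallmatrix}\bigr)$
one gets
$PAP^{T}=\bigl(\begin{smallmatrix}\varepsilon&\varepsilon^{2}&0\\ \varepsilon^{2}-1&\varepsilon-\varepsilon+x&M\\ 0&N^{T}&W\end{smallmatrix}\bigr)$,
which is exactly the second form since $\varepsilon^{2}=1$, and conjugating by $\operatorname{diag}(-1,1,\dots,1)$ handles both sign choices. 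Your approach buys three things the paper's does not make explicit: it avoids any reliance on the figures beyond the initial construction of $A$; it shows that the \emph{same} data $x,M,N$ serve in all four matrices (the geometric proof, constructing two a priori different surfaces, leaves this implicit); and it proves Lemma~\ref{prop:2} as an immediate by-product, since your matrices are not merely $S$-equivalent but congruent. The one point worth stating explicitly in your write-up is the justification that congruence by $P$ corresponds to a change of basis of $H_1$ of the fixed surface and hence preserves the underlying knot --- you do say this, and it is the correct reason the argument stays within Seifert matrices of $K'$ rather than drifting into general $S$-equivalence.
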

\begin{proof}
    The Seifert surface of $K'$ can be constructed as in
    Figure~\ref{fig:matrix}-a or Figure~\ref{fig:matrix}-b,
    corresponding to
$     \begin{pmatrix}
              \varepsilon &0 &0\\
              \varsigma &x &M\\
               0 &N^T &W
           \end{pmatrix}
       $ and 
   $     \begin{pmatrix}
              \varepsilon &\varsigma &0\\
              0 &x &M\\
               0 &N^T &W
           \end{pmatrix}
           $, respectively.
           The direction of $\alpha$ determines $\varsigma=1$ or $\varsigma=-1$.
\end{proof}

        \begin{figure}[h]
            \centering
    \includegraphics[width=0.32\textwidth]{./pic/fig9.pdf}
    \hspace{0.1\textwidth}
    \includegraphics[width=0.32\textwidth]{./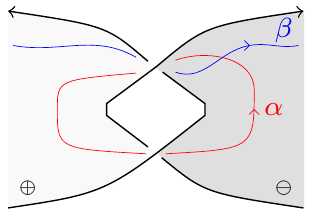}
    \\
    \includegraphics[width=0.32\textwidth]{./pic/fig8.pdf}
    \hspace{0.1\textwidth}
    \includegraphics[width=0.32\textwidth]{./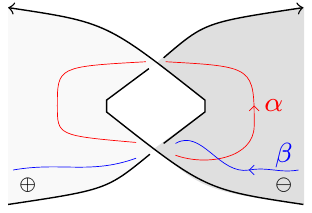}
    \\
    \hspace{0.\textwidth}(a)
    \hspace{0.1\textwidth}
    \hspace{0.285\textwidth}(b)
    \\
            \caption{Seifert surfaces}
            \label{fig:matrix}
        \end{figure}

        It is often hard to tell whether two matrices are $S$-equivalent or not,
        especially for matrices of size larger than $2\times 2$.
        As a consequence to Lemma~\ref{prop:1}, we have the following equivalence.
\begin{lemma}
 $     \begin{pmatrix}
              \varepsilon &0 &0\\
              \pm 1 &x &M\\
               0 &N^T &W
           \end{pmatrix}
       $ 
       is $S$-equivalent to 
   $     \begin{pmatrix}
              \varepsilon &\pm 1 &0\\
              0 &x &M\\
               0 &N^T &W
           \end{pmatrix}
           $ for $\varepsilon=\pm 1$.
           \label{prop:2}
\end{lemma}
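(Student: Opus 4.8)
The plan is to deduce Lemma~\ref{prop:2} directly from Lemma~\ref{prop:1} together with the fact, recalled in Section~2, that any two Seifert matrices of one knot are $S$-equivalent. The first point to settle is that the hypothesis of Lemma~\ref{prop:1} is available: every matrix satisfying the definition of a Seifert matrix from Section~2 --- a $2n\times 2n$ integer matrix $W$ with $\det(W-W^{T})=1$ --- actually occurs as the Seifert matrix of some knot $K$, since for a skew-symmetric integer matrix having determinant $1$ is equivalent to being unimodular, hence to being congruent to the standard symplectic form, and any such $W$ is realized by a knotted Seifert surface. So without loss of generality $W$ is a Seifert matrix of a genuine knot $K$.

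Now fix a sign $\varsigma\in\{+1,-1\}$. I would attach a band to a Seifert surface for $K$ exactly as in the local picture of Figure~\ref{fig:twist}, so that the two new generators $\alpha,\beta$ of the first homology of the enlarged surface satisfy $\operatorname{lk}(\alpha,\alpha^{+})=\varepsilon$, $\operatorname{lk}(\beta,\beta^{+})=x$, $\operatorname{lk}(\alpha,\beta^{+})=\varsigma$, and so that the remaining linking numbers of $\alpha,\beta$ with the old generators reproduce the prescribed row vectors $M,N$; the direction chosen for $\alpha$ is what fixes the value of $\varsigma$. This is an $\varepsilon$-unknotting operation carrying $K$ to some knot $K'$, so by Lemma~\ref{prop:1} both $\begin{pmatrix}\varepsilon&0&0\\\varsigma&x&M\\0&N^{T}&W\end{pmatrix}$ and $\begin{pmatrix}\varepsilon&\varsigma&0\\0&x&M\\0&N^{T}&W\end{pmatrix}$ are Seifert matrices of $K'$, and therefore $S$-equivalent to each other. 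Letting $\varsigma$ be $+1$ and then $-1$ yields both cases of the statement.

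The one step that needs a little care is the geometric realization: that for an arbitrary Seifert matrix $W$, arbitrary $x\in\Z$ and arbitrary row vectors $M,N$, one really can perform an $\varepsilon$-unknotting operation on a diagram of $K$ producing precisely that matrix. This follows from the local model in Figure~\ref{fig:twist}, where the new band can be plumbed onto the surface with any prescribed linking behaviour relative to the existing generators while $\operatorname{lk}(\beta,\beta^{+})$ is controlled by the number of full twists put into the band. If one prefers to avoid the geometry, an alternative is to compute the Blanchfield pairings of the two matrices from the presentation in \cite{friedl}*{Theorem~1.3} quoted in Section~2 and check that they agree, and then invoke the result of \cite{trotter73} that two matrices are $S$-equivalent precisely when their Blanchfield pairings coincide; this replaces the realization argument by an explicit, if slightly tedious, matrix computation.
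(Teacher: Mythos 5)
Your proof is correct and follows essentially the same route as the paper, which derives Lemma~\ref{prop:2} directly from Lemma~\ref{prop:1}: both matrices arise as Seifert matrices of the same knot $K'$ obtained by the $\varepsilon$-unknotting operation, hence are $S$-equivalent by Levine's theorem. Your additional care about geometrically realizing an arbitrary $W$, $x$, $M$, $N$ fills in a step the paper leaves implicit, but does not change the argument.
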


Now we  show
that some Alexander polynomials are only realizable by Seifert matrices with
algebraic unknotting number one.

\begin{lemma}
    If a $2\times 2$ Seifert matrix $V$ has
    $\det V=D\in \{1,2,3,5\}$, then $u_a(V)=1$.
    \label{lem_c}
\end{lemma}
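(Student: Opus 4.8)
The plan is to reduce the statement to the classification of $2\times 2$ Seifert matrices by determinant, and then to exhibit, for each of the four values $D\in\{1,2,3,5\}$, a concrete algebraic unknotting operation starting from the empty class. Recall that a $2\times 2$ Seifert matrix $V=\begin{pmatrix}a&b\\c&d\end{pmatrix}$ satisfies $\det(V-V^T)=(b-c)^2=1$, so after possibly transposing we may take $c=b+1$ (or $c=b-1$); the Alexander polynomial is $\Delta_V=-bc\,t+(ad+b c+(b-c)^2 \text{-correction})\ldots$, and in particular $\Delta_V(-1)=\pm\det(V+V^T)$ while $\det V = ad-bc$. The essential point is that the $S$-equivalence class of a $2\times 2$ Seifert matrix is determined by the Blanchfield pairing, which for such small matrices is governed by $\Delta_V$ together with an Alexander-module generator. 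So the first step is: classify, up to $S$-equivalence, the $2\times 2$ Seifert matrices with $\det V=D$ for $D\in\{1,2,3,5\}$, showing there are only finitely many classes and writing down a representative for each.

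Next, I would produce an explicit algebraic unknotting operation realizing each such class. By definition the algebraic unknotting operation sends a Seifert matrix $W$ to $\begin{pmatrix}\varepsilon&0&0\\1&x&M\\0&N^T&W\end{pmatrix}$; starting from the empty matrix $W=\varnothing$ (so $M,N$ are absent) this produces the $2\times 2$ matrix $\begin{pmatrix}\varepsilon&0\\1&x\end{pmatrix}$ with $\varepsilon=\pm1$, $x\in\Z$, which has $\det = \varepsilon x$. Using Lemma~\ref{prop:1} and Lemma~\ref{prop:2} we also have at our disposal the $S$-equivalent forms $\begin{pmatrix}\varepsilon&\pm1\\0&x\end{pmatrix}$ and the sign variants $\begin{pmatrix}\varepsilon&0\\-1&x\end{pmatrix}$. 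Choosing $\varepsilon=1$ and $x=D$ gives a matrix of determinant $D$ obtained from $[\varnothing]$ by a single algebraic unknotting operation, hence with $u_a=1$. The content is then to check that for each $D\in\{1,2,3,5\}$ this accounts for \emph{all} $S$-equivalence classes of $2\times 2$ matrices of that determinant: one compares Alexander polynomials and Blanchfield pairings of the candidate $V$ against the explicit family $\begin{pmatrix}1&0\\1&x\end{pmatrix}$ (and its transpose/congruents), invoking the fact cited in Section~2 that two matrices are $S$-equivalent iff they have the same Blanchfield pairing.

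The arithmetic heart of the argument — and the step I expect to be the main obstacle — is showing that the restriction $D\in\{1,2,3,5\}$ is exactly what makes every class reachable. For a $2\times 2$ matrix with $c=b+1$ one has $\det V = ad-b(b+1)$ and $\Delta_V(-1)=\pm(\,( \,2a-?\,)(\,\ldots)\,)$; reducing $V$ modulo congruence one can normalise $b\in\{0,-1\}$ or more generally control $b$ by column operations, and the question becomes whether the pair $(\Delta_V, \text{generator})$ can always be matched by some $\begin{pmatrix}1&0\\1&x\end{pmatrix}$. This works precisely when $D$ is a prime $\le 5$ or $D=1$, because the number of residues that can occur as the ``off-diagonal'' data of the Blanchfield form is then forced to collapse — for $D\le 3$ there is essentially a unique form up to isometry, and $D=5$ is the largest value for which the relevant unit group $(\Z/D)^\times / (\pm1)$ is trivial, so the pairing $(\pm 1/\Delta_V)$ of Lemma~\ref{lem_mura}-type is unique. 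I would make this precise by a direct case check: for each $D$ list the possible $(a,b,d)$ up to congruence, compute $\Delta_V$, and match. Once the classification shows every class has a representative of the form $\begin{pmatrix}1&0\\1&D\end{pmatrix}$ (up to $S$-equivalence), the conclusion $u_a(V)=1$ is immediate from the definition of the algebraic unknotting operation applied to $[\varnothing]$.
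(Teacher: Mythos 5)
Your high-level strategy matches the paper's: show every $2\times 2$ Seifert matrix of determinant $D$ is $S$-equivalent to one obtained from the empty matrix by a single algebraic unknotting operation, the candidates being $\begin{pmatrix}\varepsilon&0\\1&x\end{pmatrix}$ and their variants from Lemmas~\ref{prop:1} and \ref{prop:2}. But the entire content of the lemma is the classification step, and you leave it as a plan rather than a proof. The paper's key input, which you are missing, is Trotter's congruence normal form: since $\det V=D>0$ forces $V$ or $-V$ to be positive definite, one may assume $V$ is congruent to $\begin{pmatrix}a&b+1\\b&c\end{pmatrix}$ with $0<2b+1\le\min(a,c)$. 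Then $b\ge 1$ would give $ac-b(b+1)\ge(2b+1)^2-b(b+1)=3b^2+3b+1\ge 7>D$, so $b=0$ and $ac=D$; since $D$ is $1$ or prime, $\{a,c\}=\{1,D\}$, and the resulting matrix is exactly one of the reachable ones. This elementary inequality is what makes the case check finite and is also where the hypothesis $D\in\{1,2,3,5\}$ enters (it excludes $D=4$ via $a=c=2$ and $D\ge 7$ via $b\ge1$). Your proposed substitute --- classifying via Blanchfield pairings and ``matching'' --- is never executed, and without a normal form there is no a priori bound on the entries $(a,b,d)$ you say you would enumerate.

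Moreover, the heuristic you offer for why the bound is exactly $\{1,2,3,5\}$ is incorrect: $(\Z/5)^\times/\{\pm1\}$ has order $2$, not $1$, so the claim that ``$D=5$ is the largest value for which the relevant unit group is trivial'' is false, and it would in any case not explain why $D=4$ is excluded while $D=5$ is allowed. The true dichotomy is arithmetic: $D$ must be small enough that $b=0$ is forced ($D\le 6$) and must be $1$ or prime so that $ac=D$ has only the trivial factorization. You also do not address the negative definite case explicitly; the paper handles it by observing that $\begin{pmatrix}-1&-1\\0&-D\end{pmatrix}$ is likewise produced by a single algebraic unknotting operation with $\varepsilon=-1$.
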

\begin{proof}
    Since $\det V>0$ and the matrix size is $2\times 2$,
    either $V$ or $-V$ is positive definite.
Every $2\times2$ positive definite Seifert matrix is congruent to a matrix 
$
\begin{pmatrix}
    a&b+1\\b&c
\end{pmatrix}
$, where $0<2b+1\le \min(a,c)$;
see \cite{trotter73}*{p.204}.
Since $b=0$ is the only solution to $ac-b(b+1)=D$, we obtain $ac=D$.
Therefore, we have  either $a=D$ and $c=1$, or $a=1$ and $c=D$.
By Lemma~\ref{prop:2},
$ \begin{pmatrix} 1&1\\0&D \end{pmatrix} $
is $S$-equivalent to
$ \begin{pmatrix} 1&0\\1&D \end{pmatrix} $,
which is congruent to
$ \begin{pmatrix} D&1\\0&1 \end{pmatrix} $.

By Lemma~\ref{prop:1},
both
$
\begin{pmatrix}
    1&1\\0&D
\end{pmatrix}
$ and $
\begin{pmatrix}
    -1&-1\\0&-D
\end{pmatrix}
$  have algebraic unknotting number one, so the proof is complete.
\end{proof}

\begin{lemma}
    For a Seifert matrix $V$,
    if $\Delta_V=ht+ht^{-1}+1-2h$ with $h\in \{1,2,3,5\}$,  then $u_a(V)=1$.
    \label{lem_d}
\end{lemma}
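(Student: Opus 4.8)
The plan is to reduce Lemma~\ref{lem_d} to Lemma~\ref{lem_c} by computing the Alexander polynomial of a convenient $2\times 2$ Seifert matrix and showing it matches $\Delta_V = ht+ht^{-1}+1-2h$. First I would take the candidate matrix $V_0=\begin{pmatrix} 1 & 1 \\ 0 & h\end{pmatrix}$, which satisfies $\det(V_0-V_0^T)=1$ and hence is a genuine Seifert matrix, and directly evaluate $\Delta_{V_0}=\det\!\left(t^{1/2}V_0-t^{-1/2}V_0^T\right)$. A short computation gives $\det\begin{pmatrix} t^{1/2}-t^{-1/2} & t^{1/2} \\ -t^{-1/2} & h(t^{1/2}-t^{-1/2})\end{pmatrix} = h(t^{1/2}-t^{-1/2})^2 + 1 = h(t-2+t^{-1})+1 = ht+ht^{-1}+1-2h$, so $\Delta_{V_0}=\Delta_V$. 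Since $h\in\{1,2,3,5\}$, we have $\det V_0 = h\in\{1,2,3,5\}$, so Lemma~\ref{lem_c} applies and gives $u_a(V_0)=1$.

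The remaining step is to pass from the specific matrix $V_0$ to an arbitrary Seifert matrix $V$ with the same Alexander polynomial. Here I would invoke the fact recorded earlier in the excerpt that $u_a$ is an invariant of the $S$-equivalence class, together with the classification of $S$-equivalence classes via the Blanchfield pairing (\cite{trotter73}, as cited). Concretely, a $2\times 2$ Seifert matrix $V$ with $\Delta_V=ht+ht^{-1}+1-2h$ has $\det V=\Delta_V(1)=\ldots$; more usefully, one checks that $\Delta_V$ here has degree $2$ with $|\Delta_V(-1)| = |4h+1-2h|=|2h+1|$, and the point is that a degree-two Alexander polynomial together with the constraint $\det(V-V^T)=1$ pins down the $S$-equivalence class of $V$ up to finitely many possibilities, each of which is $S$-equivalent to $V_0$ or $-V_0$. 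Alternatively, and more cleanly, I would argue that the hypothesis really only needs $u_a$ to depend on $\Delta_V$ in this degree-two case: by Lemma~\ref{lem_c} and its proof, \emph{every} positive-definite $2\times 2$ Seifert matrix realizing $\Delta_V$ is congruent to $\begin{pmatrix} a & b+1 \\ b & c\end{pmatrix}$ with $0<2b+1\le\min(a,c)$, and solving $ac-b(b+1)=h$ for $h$ prime or $1$ forces $b=0$, $\{a,c\}=\{1,h\}$; combined with the $S$-equivalence $\begin{pmatrix}1&1\\0&h\end{pmatrix}\sim\begin{pmatrix}h&1\\0&1\end{pmatrix}$ of Lemma~\ref{prop:2}, this shows every such $V$ (or $-V$) has $u_a(V)=1$.

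The main obstacle is the passage from ``$2\times 2$ matrices realizing $\Delta_V$ all have $u_a=1$'' to ``every Seifert matrix (of any size) with $\Delta_V=ht+ht^{-1}+1-2h$ has $u_a=1$'': a priori $V$ could be a large matrix that happens to have this Alexander polynomial. To handle this I would use that a Seifert matrix $V$ with $\deg\Delta_V=2$ is $S$-equivalent to a $2\times2$ matrix — this follows because the Alexander module $A_V$ is then presented by a $2\times 2$ matrix over $\Lambda$ (the Alexander polynomial generates the order ideal), so the Blanchfield pairing, which by the cited result of Trotter classifies the $S$-equivalence class, is carried by a $2\times2$ representative. Once $V$ is $S$-equivalent to such a $2\times 2$ matrix $V_1$ with $\Delta_{V_1}=\Delta_V$, the previous paragraph applies to $V_1$, and $S$-equivalence invariance of $u_a$ finishes the proof. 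I expect the write-up to be short: the polynomial computation is immediate, and the reduction to the $2\times 2$ case is exactly the content already assembled in Lemmas~\ref{prop:2} and~\ref{lem_c}.
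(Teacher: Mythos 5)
Your overall route matches the paper's: reduce to a $2\times 2$ Seifert matrix of determinant $h$ and invoke Lemma~\ref{lem_c}. The explicit computation of $\Delta_{V_0}$ for $V_0=\begin{pmatrix}1&1\\0&h\end{pmatrix}$ is correct but not actually needed: once one knows that $V$ is $S$-equivalent to a nonsingular $2\times 2$ matrix $V'$, the coefficient of $t$ in $\Delta_{V'}=\det\bigl(t^{1/2}V'-t^{-1/2}(V')^T\bigr)$ is $\det V'$, so $\det V'=h$ and Lemma~\ref{lem_c} applies verbatim. (A minor unused slip: $\Delta_V(-1)=1-4h$, not $\pm(2h+1)$.) Likewise your second paragraph, classifying the positive-definite $2\times2$ forms of determinant $h$, only reproduces what is already inside the proof of Lemma~\ref{lem_c}.

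The genuine gap is in the step you yourself flag as the main obstacle: passing from an arbitrary Seifert matrix $V$ with $\Delta_V=ht+ht^{-1}+1-2h$ to a $2\times2$ representative. The argument ``the Alexander module is presented by a $2\times2$ matrix over $\Lambda$, so the Blanchfield pairing is carried by a $2\times2$ representative'' does not work as stated. First, knowing only that $\Delta_V$ has this degree does not by itself produce a $2\times 2$ presentation of $A_V$ over $\Lambda$. Second, and more seriously, even an isomorphism of $A_V$ with a module presented by $tV_1-V_1^T$ for some $2\times2$ Seifert matrix $V_1$ would not show that the two Blanchfield \emph{pairings} agree, which is what Trotter's 1973 classification requires before you may conclude $S$-equivalence; the module, let alone its order $\Delta_V$, does not determine the pairing. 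The paper closes this step with a different citation: Trotter's earlier theorem (\cite{trotter62}, pp.~484--486) that every Seifert matrix is $S$-equivalent to a nonsingular one. For a nonsingular $2n\times 2n$ representative the Alexander polynomial has half-degree $n$ with extreme coefficient equal to the determinant, so the given $\Delta_V$ forces $n=1$ and $\det=h$. With that substitution for your third paragraph, the proof goes through.
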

\begin{proof}
    Because $\Delta_V=ht+ht^{-1}+1-2h$,
$V$ is $S$-equivalent to a
$2\times 2$ Seifert matrix $V'$ with $\det V'=h$ ; see \cite{trotter62}*{pp.484-486}.
By Lemma~\ref{lem_c}, 
we have $u_a(V)=1$.
\end{proof}

The next lemma relates the distance between matrices with
the distance between polynomials.
\begin{lemma}
If $K_1$ and $K_2$ are knots with Seifert matrices $V_1$ and $V_2$ and Alexander
polynomials $\Delta_{K_1}$ and $\Delta_{K_2},$ respectively, then
$d_G^a([V_1],[V_2])\ge \rho(\Delta_{K_1},\Delta_{K_2})$.
\label{lem_e}
\end{lemma}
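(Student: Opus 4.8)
The plan is to realize the algebraic Gordian distance geometrically and then compare with the polynomial distance. Suppose $d_G^a([V_1],[V_2]) = N$. By definition, there is a sequence of Seifert matrices $V_1 = W_0, W_1, \dots, W_N = V_2'$ with $[V_2'] = [V_2]$, where consecutive matrices differ either by an $S$-equivalence or by a single algebraic unknotting operation, and exactly $N$ of the steps are algebraic unknotting operations. The key point is that each algebraic unknotting operation is geometrically realizable: by the discussion of Figure~\ref{fig:twist} in Section~2, adding a twist to a knot (a single crossing change) produces exactly the matrix obtained by an algebraic unknotting operation, and by Lemma~\ref{prop:1} the ambiguity in the sign of $\varsigma$ is harmless since all the resulting matrices are Seifert matrices of the same knot $K'$.

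First I would build, step by step along the sequence, a chain of knots realizing it. Start with $K_1$, whose Seifert matrix $W_0 = V_1$ we may take as given. At each step where $W_{i}$ and $W_{i+1}$ are $S$-equivalent, keep the same knot; at each step where $W_{i+1}$ is obtained from $W_i$ by an algebraic unknotting operation, apply a single crossing change (a twist) to the current knot to obtain a new knot whose Seifert matrix is $W_{i+1}$ — this is exactly the geometric realization recorded in Section~2. Because $S$-equivalence does not change the knot in this construction (we only change the chosen Seifert matrix), after the full sequence we arrive at a knot $K_2'$ with Seifert matrix $V_2'$, obtained from $K_1$ by exactly $N$ crossing changes, so $d_G(K_1, K_2') \le N$.

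Next I would pin down the Alexander polynomials. Since $V_2'$ is $S$-equivalent to $V_2$ and the Alexander polynomial is an $S$-equivalence invariant (it is computed as $\det(t^{1/2}V - t^{-1/2}V^T)$ and is unchanged by congruences, enlargements, and reductions), we have $\Delta_{K_2'} = \Delta_{V_2'} = \Delta_{V_2} = \Delta_{K_2}$. Likewise $\Delta_{K_1} = \Delta_{V_1}$. Thus $K_1$ and $K_2'$ are knots with Alexander polynomials $\Delta_{K_1}$ and $\Delta_{K_2}$ respectively, and $d_G(K_1, K_2') \le N$. By the definition of the Alexander polynomial distance as a minimum over all such pairs of knots, $\rho(\Delta_{K_1}, \Delta_{K_2}) \le d_G(K_1, K_2') \le N = d_G^a([V_1],[V_2])$, which is the claimed inequality.

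The main obstacle is the first step: verifying carefully that every algebraic unknotting operation in an abstract sequence can be performed by an honest crossing change on the knot at that stage, rather than merely at the level of matrices. This requires that the intermediate matrix $W_i$ actually be (congruent to) a Seifert matrix arising from a concrete Seifert surface to which a twist in the sense of Figure~\ref{fig:twist} can be added; the content of Section~2, together with Lemmas~\ref{prop:1} and~\ref{prop:2}, is precisely what supplies this, so the proof reduces to assembling those ingredients along the sequence. The bookkeeping of which steps are $S$-equivalences (contributing no crossing change) versus unknotting operations (contributing one) is the only delicate part of the argument, and it matches the definition of $d_G^a$ exactly.
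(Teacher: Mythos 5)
Your overall strategy (realize the algebraic operations geometrically and then invoke the definition of $\rho$ as a minimum) is the right one, but the execution has a genuine gap, and it also does far more work than is needed. The gap: you insist on starting from the \emph{given} knot $K_1$ and performing the whole chain $W_0,\dots,W_N$ on it. At an $S$-equivalence step you ``keep the same knot and only change the chosen Seifert matrix,'' but a matrix $S$-equivalent to a Seifert matrix of $K$ need not be realized by any actual Seifert surface of $K$ ($S$-equivalence includes reductions, and a concrete surface realizing $W_i$ need not display the band structure required to pass to a reduction of $W_i$); similarly, at an operation step going in the reduction direction you cannot ``undo a twist'' on a surface that does not exhibit one. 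Lemma~\ref{prop:1} does not supply what you need here: it says that a geometric twist produces a matrix of the stated form, not that an abstract algebraic unknotting operation applied to an arbitrary Seifert matrix of a fixed knot can always be realized by a crossing change on that knot. So the step you yourself flag as the main obstacle is not actually covered by the ingredients you cite.

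The repair, which is what the paper does, is twofold. First, since $\rho(\Delta,\Delta')\le 2$ for every pair of Alexander polynomials, the inequality is automatic when $d_G^a([V_1],[V_2])\ge 2$, and the case $d_G^a([V_1],[V_2])=0$ is immediate from the $S$-equivalence invariance of the Alexander polynomial; so only the case $d_G^a([V_1],[V_2])=1$ requires an argument, and no iteration over a chain of length $N$ is ever needed. Second, in that case you should not modify $K_1$ at all: $\rho$ is a minimum over \emph{all} knots with the given Alexander polynomials, so you are free to discard $K_1$ and $K_2$ and build fresh knots. Choose $V_1'\in[V_1]$ and $V_2'\in[V_2]$ with one matrix obtained from the other by a single algebraic unknotting operation, realize the smaller matrix by an abstract Seifert surface as in Figure~\ref{fig:twist}-d, and add the twisted band as in Figure~\ref{fig:twist}-e or f to realize the enlargement; the boundary knots $K_1'$ and $K_2'$ then satisfy $d_G(K_1',K_2')\le 1$ and have Alexander polynomials $\Delta_{K_1}$ and $\Delta_{K_2}$, giving $\rho(\Delta_{K_1},\Delta_{K_2})\le 1$. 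This sidesteps every realization issue that your chain construction runs into.
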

\begin{proof}
    If $d_G^a([V_1],[V_2])=0$,
$V_1$ is $S$-equivalent to $V_2$ and hence
$\Delta_{K_1}=\Delta_{K_2}$, which gives 
$ \rho(\Delta_{K_1},\Delta_{K_2})=0$.

    If $d_G^a([V_1],[V_2])=1$, 
    then we can find Seifert matrices $V_1' \in[V_1]$
    and
    $V_2'\in[V_2]$ such that $V'_1$ can be turned into $V'_2$ by one algebraic unknotting operation.
    We can construct two Seifert surfaces as shown in Figure~1-d,e (or Figure~1-d,f), where Seifert matrices of them are $V'_1$ and $V'_2$ respectively.
Let  $K'_1$ and $K'_2$  be the boundaries of these two Seifert surfaces.
Then we have $d_G(K'_1,K'_2 )=1$. Since 
 $\Delta_{K'_1}=\Delta_{K_1}$ and $\Delta_{K'_2}=\Delta_{K_2}$,
we obtain 
$\rho(\Delta_{K_1},\Delta_{K_2})\le 1$.

If $d_G^a([V_1],[V_2])\ge 2$, the inequality holds because $\rho(\Delta_1,\Delta_2)\le 2$
for any pair of Alexander polynomials $\Delta_1,\Delta_2$.
\end{proof}

Consequently, we have
$d_G(K_1,K_2)\ge d_G^a([V_1],[V_2])\ge \rho(\Delta_{K_1},\Delta_{K_2})$.

\section{Main theorem and its consequence}
In this section, we examine the structure of the Blanchfield pairing
realized by a pair of $S$-equivalence classes with algebraic
Gordian distance one. We obtain conditions 
expressed in terms of the Alexander polynomials for two $S$-equivalence classes to have
algebraic Gordian distance one.
In corollaries to the main theorem,
we provide an answer to Jong's question by showing that two Alexander polynomials cannot be 
realized by knots with Gordian distance one unless
a corresponding quadratic equation admits an integer solution.

\begin{theorem}[Main Theorem]
    Let $V$ and $V'$ be two Seifert matrices.
    If the algebraic Gordian distance $d_G^a([V],[V'])$=1,
then there exist $a\in A_V$
and $a'\in A_{V'}$ such that
$\ds\beta(a,a)\equiv \pm\frac{\Delta_{ V' }}{\Delta_V} \pmod{\Lambda}$
and
$\ds\beta(a',a')\equiv \pm\frac{\Delta_V}{\Delta_{V'}} \pmod{\Lambda}$.
    \label{thm_main2}
\end{theorem}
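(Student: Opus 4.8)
The plan is to analyze what one algebraic unknotting operation does to the Blanchfield pairing, using the explicit formula from \cite{friedl}*{Theorem 1.3} and the fact (\cite{trotter73}) that $S$-equivalence is detected by the Blanchfield pairing. By definition of $d_G^a([V],[V'])=1$, there are representatives $W\in[V]$ and $W'\in[V']$ with $W'$ obtained from $W$ by one algebraic unknotting operation, so by Lemma~\ref{prop:1} we may take
\[
W'=\begin{pmatrix} \varepsilon & 0 & 0\\ 1 & x & M\\ 0 & N^T & W\end{pmatrix},
\qquad \varepsilon=\pm1,\ x\in\Z .
\]
First I would record the relation between the Alexander polynomials: a cofactor expansion of $\det\!\bigl(tW'-W'^T\bigr)$ along the first row and column (the block $\left(\begin{smallmatrix}\varepsilon & 0\\ 1 & x\end{smallmatrix}\right)$ becomes $\left(\begin{smallmatrix}(t-1)\varepsilon & -1\\ t & (t-1)x\end{smallmatrix}\right)$ after multiplying by $t$ and subtracting the transpose) shows that $\Delta_{V'}=\Delta_{W'}$ differs from $\Delta_{W}=\Delta_V$ by a factor that is a unit times the determinant of that corner, which is of the shape $-t+t\cdot(\text{polynomial})$; the key point is only that the quotient $\Delta_{V'}/\Delta_V$ is represented by a Laurent polynomial over $\Lambda$ localized appropriately, so the expressions $\pm\Delta_{V'}/\Delta_V$ and $\pm\Delta_V/\Delta_{V'}$ make sense in $Q(\Lambda)/\Lambda$.

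Next I would locate the desired element $a\in A_V$. The module $A_V=\Lambda^{2n}/(tV-V^T)\Lambda^{2n}$ is embedded in $A_{W'}$ as the submodule spanned by the last $2n$ coordinates; the new generator, say $e$, corresponding to the first coordinate of $W'$ is, by the block structure, a generator of $A_{W'}$ whose "self-pairing" computed via the Friedl formula $v\mapsto v^T(t-1)(W'-tW'^T)^{-1}\bar v$ involves only the $(1,1)$-entry of $(W'-tW'^T)^{-1}$, which by Cramer's rule equals (cofactor of $(1,1)$)$/\det(W'-tW'^T)=\pm\Delta_V/\Delta_{V'}$ up to units of $\Lambda$. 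Thus $\beta_{W'}(e,e)\equiv\pm\Delta_V/\Delta_{V'}\pmod\Lambda$. By the same reasoning applied to the "dual" description — equivalently, by running the argument with the roles of $W$ and $W'$ interchanged, which is legitimate since an algebraic unknotting operation is invertible in the metric $d_G^a$ (Lemma~\ref{prop:1} again gives a matrix of the same shape expressing $W$ from $W'$, or one reduces $W'$ back along the first row) — one obtains an element $a'\in A_{V'}$ with $\beta_{V'}(a',a')\equiv\pm\Delta_{V'}/\Delta_V\pmod\Lambda$. Finally, since $W\in[V]$ and $W'\in[V']$, the Blanchfield pairings of $W,W'$ are isometric to those of $V,V'$ respectively, so transporting $e$ through this isometry yields the claimed $a\in A_V$, and similarly for $a'$; matching the $\pm$ signs is harmless because the statement is only up to sign.

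The main obstacle I anticipate is the bookkeeping in the Cramer's-rule/cofactor computation: one must check that the $(1,1)$-cofactor of the $(2n+2)\times(2n+2)$ matrix $tW'-W'^T$ (respectively $W'-tW'^T$) really is $\pm t^{?}\,\Delta_V$ and not contaminated by the vectors $M,N$ and the entry $x$, and that the factor $(t-1)$ in Friedl's formula combines correctly with the unit ambiguity so that the ratio lands on $\pm\Delta_V/\Delta_{V'}$ on the nose modulo $\Lambda$. Expanding along the first row (which has the single off-diagonal entry $-1$ in the corner block) isolates exactly the minor that equals $\det(tW-W^T)$ times a corner factor, so this should go through cleanly; the two cases $\varepsilon=\pm1$ and the two matrix shapes from Lemma~\ref{prop:1} are handled uniformly. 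A secondary point to be careful about is that $e$ is genuinely a generator of $A_{W'}$ (so that it descends to a legitimate element for the statement, which asks for an element, not a generator — so in fact even this is not strictly needed), and that the identification $A_V\hookrightarrow A_{W'}$ is the natural one under which the Friedl pairing restricts correctly.
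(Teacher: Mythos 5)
Your proposal is correct and follows essentially the same route as the paper: take representatives related by one algebraic unknotting operation, apply the Friedl--Powell formula to the new generator, and evaluate the $(1,1)$-entry of the inverse via the cofactor and the first-column expansion $\det(W'-tW'^T)=\varepsilon(1-t)\,\mathrm{cof}_{11}+t\det(W-tW^T)$, which is exactly the identity the paper uses. Two small points to tidy up: the $(1,1)$-cofactor is not itself a unit times $\Delta_V$ (it is the displayed combination divided by $\varepsilon(1-t)$, and only after multiplying by $(t-1)$, dividing by the full determinant, and discarding the integer term does one land on $\pm\Delta_V/\Delta_{V'}$ modulo $\Lambda$), and in your final paragraph the element obtained by interchanging the roles of $W$ and $W'$ should be labelled $a\in A_V$ with self-pairing $\pm\Delta_{V'}/\Delta_V$, not a second $a'\in A_{V'}$.
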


\begin{proof}
    If $[V]$ and $[V']$ have algebraic Gordian distance one,
    there exist $W\in[V]$ and $W'\in[V']$ such that
    $W$ can be obtained from $W'$ by an algebraic unknotting operation.
    By definition,
    the algebraic unknotting operation assigns $W'$ to 
    $     \begin{pmatrix}
              \varepsilon &0 &0\\
               1 &x &M\\
               0 &N^T &W'
           \end{pmatrix}
           $ for $\varepsilon=\pm 1$.
           Therefore, we have
$$
    W-tW^T=
    \begin{pmatrix}
        \varepsilon(1-t) &-t &0\\
               1 &x (1-t)&M-tN\\
               0 &N^T-tM^T &W'-tW'^{T}
           \end{pmatrix}.
$$

    Let $a_1$ be the first element of the basis for $A_V$
    so that
    the Blanchfield pairing
    $\beta(a_1,a_1)$ is the $( 1,1 )$-entry of matrix
    $(t-1)(W-tW^T)^{-1}$ modulo $\Lambda$.
    The inverse of a non-singular matrix $M$
    is equal to $\ds\frac{\operatorname{adj}M}{\det M}$,
    where $\operatorname{adj}M$ is the adjugate matrix, i.e. the transpose of the 
    cofactor matrix of $M$.

\begin{align}
    \displaystyle
    \beta(a_1,a_1)
    &\equiv
    (t-1)
    \frac{\left[\operatorname {adj} (W-tW^T) \right]_{ 1,1}
        }{ \det(W-tW^T) }\\
    &\equiv(t-1)\frac{
    \det \begin{bmatrix}
x (1-t)&M-tN\\
N^T-tM^T &W'-tW'^{T}
    \end{bmatrix}
    }{\det(W-tW^T) 
}.
\end{align}

The Alexander polynomials
are given by
\begin{align}
 \Delta_{V}&=\Delta_{W}=t^{-g} \det(W-tW^T) \\
 \Delta_{V'}&=\Delta_{W'}=t^{1-g} \det(W'-tW'^T),
\end{align}
where $2g$ 
is the size of $W$.
The determinant
\begin{equation}
    \label{e:4}
\det(W-tW^T)=\varepsilon(1-t)
\det \begin{bmatrix}
x (1-t)&M-tN\\
N^T-tM^T &W'-tW'^{T}
\end{bmatrix}
+t\det(W'-tW'^T)
.
\end{equation}
By substituting (3), (4), (5) from (2),
we have
\begin{equation*}
    \beta(a_1,a_1)\equiv
    \frac{\varepsilon(\Delta_{V'} -\Delta_{V} )}{\Delta_V}
    \equiv\varepsilon \frac{\Delta_{V'} }{\Delta_V}
    \pmod{\Lambda}.
\end{equation*}
Let $a'_1$ be the first element of the basis for $A_{V'}$.
The equation of $\beta(a'_1,a'_1)$ can be derived in the same way.
\end{proof}
Theorem~\ref{thm_main2} gives a condition on the Blanchfield pairing when the 
algebraic
Gordian distance is one. Using similar methods, we deduce the following corollary,
which gives the same obstruction as above for a pair of knots with Gordian distance one.

\begin{corollary}
    \label{col:gordian}
If $K$ and $K'$ are two knots with
$d_G(K,K')=1$, then there exist $a\in H_1(\tilde{X}(K))$
and $a'\in H_1(\tilde{X}(K'))$ such that
$\ds\beta(a,a)\equiv \pm\frac{\Delta_{ K' }}{\Delta_K} \pmod{\Lambda}$
and
$\ds\beta(a',a')\equiv \pm\frac{\Delta_K}{\Delta_{K'}} \pmod{\Lambda}$.
\end{corollary}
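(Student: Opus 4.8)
The plan is to reduce the geometric statement to the matrix computation already carried out in the proof of Theorem~\ref{thm_main2}. Suppose $d_G(K,K')=1$, so that $K$ and $K'$ are related by a single crossing change. By the discussion surrounding Figure~\ref{fig:twist} (and Lemma~\ref{prop:1}), a crossing change can be realized on the level of Seifert surfaces: there is a Seifert surface for one of the two knots obtained from a Seifert surface for the other by attaching a band carrying a single twist, so that if $W'$ is the resulting Seifert matrix for $K'$ then
\[
    W=\begin{pmatrix}
        \varepsilon & 0 & 0\\
        1 & x & M\\
        0 & N^T & W'
    \end{pmatrix}
\]
is a Seifert matrix for $K$, for some $\varepsilon=\pm 1$, some $x\in\Z$, and row vectors $M,N$. (One must be mildly careful here: a crossing change that does \emph{not} change the knot type up to the relevant move still produces such a matrix, and the two twist types give the two signs of $\varepsilon$ exactly as in Lemma~\ref{prop:1}; alternatively one invokes the standard fact that a crossing change is realized by a band attachment as in Figure~1-d,e,f.)

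Once this matrix presentation is in hand, the argument is \emph{verbatim} the computation in the proof of Theorem~\ref{thm_main2}. First I would record
\[
    W-tW^T=\begin{pmatrix}
        \varepsilon(1-t) & -t & 0\\
        1 & x(1-t) & M-tN\\
        0 & N^T-tM^T & W'-tW'^{T}
    \end{pmatrix},
\]
then compute the $(1,1)$-entry of $(t-1)(W-tW^T)^{-1}$ via the adjugate formula, using the cofactor expansion of $\det(W-tW^T)$ along its first row (equation~\eqref{e:4}) together with the normalizations $\Delta_K=t^{-g}\det(W-tW^T)$ and $\Delta_{K'}=t^{1-g}\det(W'-tW'^{T})$ where $2g$ is the size of $W$. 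This yields, for $a$ the first basis element of $A_V\cong H_1(\tilde X(K);\Z)$,
\[
    \beta(a,a)\equiv \varepsilon\,\frac{\Delta_{K'}-\Delta_K}{\Delta_K}\equiv \pm\frac{\Delta_{K'}}{\Delta_K}\pmod{\Lambda}.
\]
Here I use Friedl's theorem (quoted in Section~2) identifying the Blanchfield pairing of $K$ with the pairing $(v,w)\mapsto v^T(t-1)(V-tV^T)^{-1}\bar w$ on $\Lambda^{2n}/(tV-V^T)\Lambda^{2n}$, applied to the Seifert matrix $W$ of $K$.

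For the second congruence, I would observe that $d_G(K,K')=1$ is symmetric in $K$ and $K'$, so the very same construction applied with the roles of the two knots exchanged produces a Seifert matrix of $K'$ that is an enlargement of a Seifert matrix of $K$, and the identical calculation gives $\beta(a',a')\equiv\pm\Delta_K/\Delta_{K'}\pmod{\Lambda}$ for a suitable generator $a'\in H_1(\tilde X(K');\Z)$. The only genuine point requiring care — the ``main obstacle'' — is the first reduction: justifying that a single crossing change really does produce an \emph{enlargement}-type Seifert matrix of the stated block form (rather than merely an $S$-equivalent or stably equivalent matrix). This is exactly the geometric content behind Lemma~\ref{prop:1} and the band picture in Figures~\ref{fig:twist} and~\ref{fig:matrix}, and once it is granted the corollary follows with no further work.
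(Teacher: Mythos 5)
Your proposal is correct and matches what the paper intends: the paper offers no explicit proof of Corollary~\ref{col:gordian}, saying only that it follows ``using similar methods,'' and your argument --- realizing the crossing change as a band twist so that one knot acquires a Seifert matrix of the enlargement form (the content of the discussion around Figure~\ref{fig:twist} and Lemma~\ref{prop:1}), then running the adjugate computation of Theorem~\ref{thm_main2} verbatim --- is exactly that. A marginally shorter route would be to invoke Lemma~\ref{lem_mura2} to get $d_G^a([V],[V'])\le 1$ and then quote Theorem~\ref{thm_main2} directly (handling the distance-zero case trivially since then $\pm\Delta_{K'}/\Delta_K\equiv 0\pmod{\Lambda}$), but your version is equally valid and makes the geometric input explicit.
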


As the Blanchfield pairing
is a complicated form, we now focus on the case where the Alexander module
is cyclic. We prove the following corollary and show that it improves
existing results.


\begin{corollary}
    If $u_a([V])=d_G^a([V],[V'])=1$,
then there exists $c\in \Lambda$ such that 
$\pm \Delta_{V'}\equiv c\bar{c}\pmod{\Delta_V}$.
\label{col:alg}
\end{corollary}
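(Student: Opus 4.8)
The plan is to combine the Main Theorem (Theorem~\ref{thm_main2}) with Murakami's Lemma~\ref{lem_mura} to reduce the Blanchfield pairing of $V$ to a single $1\times 1$ matrix, and then exploit the sesquilinearity of the pairing to pass from a fraction $\pm\Delta_{V'}/\Delta_V$ in $Q(\Lambda)/\Lambda$ to the desired congruence in $\Lambda$. First I would apply Lemma~\ref{lem_mura}: since $u_a([V])=1$, the Alexander module $A_V$ is cyclic, generated by some $\alpha$ with $\beta(\alpha,\alpha)\equiv\pm\frac{1}{\Delta_V}\pmod{\Lambda}$, and the whole Blanchfield pairing of $V$ is the $1\times 1$ matrix $(\pm\frac{1}{\Delta_V})$. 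Next, since $d_G^a([V],[V'])=1$, Theorem~\ref{thm_main2} gives an element $a\in A_V$ with $\beta(a,a)\equiv\pm\frac{\Delta_{V'}}{\Delta_V}\pmod{\Lambda}$.

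The key algebraic step is then to write $a$ in terms of the generator $\alpha$: because $A_V$ is cyclic, $a = c\cdot\alpha$ for some $c\in\Lambda$. Sesquilinearity of $\beta$ (the property $\beta(cx,cx)=c\bar c\,\beta(x,x)$ recalled in the Preliminaries) yields
\begin{equation*}
\pm\frac{\Delta_{V'}}{\Delta_V}\equiv\beta(a,a)=\beta(c\alpha,c\alpha)=c\bar c\,\beta(\alpha,\alpha)\equiv \pm\frac{c\bar c}{\Delta_V}\pmod{\Lambda}.
\end{equation*}
Clearing the denominator $\Delta_V$ — that is, observing that a congruence $\frac{f}{\Delta_V}\equiv\frac{g}{\Delta_V}\pmod{\Lambda}$ forces $f\equiv g\pmod{\Delta_V}$ in $\Lambda$ — gives exactly $\pm\Delta_{V'}\equiv c\bar c\pmod{\Delta_V}$, which is the assertion of the corollary (absorbing the signs appropriately).

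I would expect the main obstacle to be the bookkeeping around the representative $a$ and the module structure: Theorem~\ref{thm_main2} produces $a$ as "the first basis element" of a particular presentation of $A_V$, and one must check that under the cyclicity furnished by Lemma~\ref{lem_mura} this $a$ really is a $\Lambda$-multiple of the distinguished generator $\alpha$ — i.e. that the two descriptions of $A_V$ are compatible, so that $\beta$ is literally the scalar form $c\mapsto \pm c\bar c/\Delta_V$ on all of $A_V$, not merely on a cyclic submodule. Once that identification is in place, the passage from $Q(\Lambda)/\Lambda$ back to $\Lambda$ is routine, since $\Lambda/\Delta_V\Lambda\hookrightarrow (\tfrac{1}{\Delta_V}\Lambda)/\Lambda$ via multiplication by $\tfrac{1}{\Delta_V}$. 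A minor point to handle with care is the $\pm$ signs: the sign in $\beta(\alpha,\alpha)$ and the sign in $\beta(a,a)$ are a priori independent, but since the statement only asserts $\pm\Delta_{V'}\equiv c\bar c$, any combination of signs is absorbed into the single $\pm$ on the left.
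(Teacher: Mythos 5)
Your proposal is correct and follows essentially the same route as the paper: invoke Theorem~\ref{thm_main2} to get $a\in A_V$ with $\beta(a,a)\equiv\pm\Delta_{V'}/\Delta_V$, use Lemma~\ref{lem_mura} (via $u_a([V])=1$) to see that $A_V$ is cyclic with a generator $g$ satisfying $\beta(g,g)\equiv\pm 1/\Delta_V$, write $a=cg$, and conclude by sesquilinearity and clearing the denominator. The only cosmetic difference is that the paper obtains its element with $\beta(g,g)\equiv\pm 1/\Delta_V$ by applying Theorem~\ref{thm_main2} again to $d_G^a([V],[\varnothing])=1$ before citing Lemma~\ref{lem_mura} for cyclicity, whereas you take the generator directly from Lemma~\ref{lem_mura}; the substance is identical.
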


\begin{proof}
    Since $u_a([V])=d_G^a([V],[V'])=1$, by Theorem~\ref{thm_main2}, there exist
    $a\in A_V$ and $g\in A_{V}$
such that 
$$
\beta(a,a)\equiv \pm\frac{\Delta_{ V' }}{\Delta_V}\pmod{ \Lambda}
\text{~~~~~and~~~~~}
\beta(g,g)\equiv \pm\frac{1}{\Delta_{V}} \pmod{ \Lambda}.
$$

By Lemma~\ref{lem_mura}, the Blanchfield pairing on $A_V$
is cyclic and generated by $g$.
Therefore, there exists $c\in \Lambda$ such that $a=cg$.
Hence we have
$$\pm \frac{\Delta_{V'}}{\Delta_V} \equiv\beta(cg,cg)
\equiv c\bar{c}\beta(g,g)\equiv\frac{ c\bar{c}}{ \Delta_V}  \pmod{ \Lambda},$$
which gives $\pm \Delta_{V'}=c\bar{c}\pmod{\Delta_V}$. This completes the proof.
\end{proof}
The following result is a natural consequence of Corollary \ref{col:alg}.
\begin{corollary}
    If $u_a(K)=d_G(K,K')=1$, then there exists $c\in \Lambda$ such that 
$\pm \Delta_{K'}\equiv c\bar{c}\pmod{\Delta_K}$.
\label{col:1}
\end{corollary}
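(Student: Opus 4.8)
The plan is to deduce Corollary~\ref{col:1} from Corollary~\ref{col:alg} by lifting the statement about knots to a statement about their Seifert matrices. First I would pick a Seifert matrix $V$ for $K$ and a Seifert matrix $V'$ for $K'$, so that $\Delta_K=\Delta_V$ and $\Delta_{K'}=\Delta_{V'}$, and recall the convention $u_a(K)=u_a(V)=u_a([V])$; thus the hypothesis $u_a(K)=1$ becomes $u_a([V])=1$. By Lemma~\ref{lem_mura2}, $d_G(K,K')=1$ forces $d_G^a([V],[V'])\le 1$, so there are only two cases to consider.

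If $d_G^a([V],[V'])=1$, then $u_a([V])=d_G^a([V],[V'])=1$ and Corollary~\ref{col:alg} applies verbatim, yielding $c\in\Lambda$ with $\pm\Delta_{V'}\equiv c\bar c\pmod{\Delta_V}$, i.e. $\pm\Delta_{K'}\equiv c\bar c\pmod{\Delta_K}$. If instead $d_G^a([V],[V'])=0$, then $V$ and $V'$ are $S$-equivalent, so $\Delta_K=\Delta_{K'}$ and in particular $\Delta_K\mid\Delta_{K'}$; choosing $c=0$ then gives $c\bar c=0\equiv\pm\Delta_{K'}\pmod{\Delta_K}$. Either way the desired congruence holds.

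I do not expect a genuine obstacle: all of the arithmetic content is already packaged in Theorem~\ref{thm_main2} and its cyclic-module consequence Corollary~\ref{col:alg}, and Corollary~\ref{col:1} is simply the specialization in which the $S$-equivalence classes are realized by honest knots. The only subtlety worth flagging is that $d_G(K,K')=1$ permits the algebraic Gordian distance to collapse to $0$, which is why the $S$-equivalent case must be recorded separately; it is disposed of at once by taking $c=0$.
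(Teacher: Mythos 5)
Your proposal is correct and follows essentially the same route as the paper: choose Seifert matrices, use Lemma~\ref{lem_mura2} to get $d_G^a([V],[V'])\le 1$, apply Corollary~\ref{col:alg} in the distance-one case, and dispose of the $S$-equivalent case with $c=0$. No substantive difference from the paper's own argument.
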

\begin{proof}
    Let $V$ and $V'$ be Seifert matrices for $K$ and $K'$, respectively.
    Clearly  $\Delta_V=\Delta_{K}$ and $\Delta_{V'}=\Delta_{K'}$.
    Since $d_G^a([V],[V'])\le d_G(K,K')=1$, we have $d_G^a([V],[V'])\le1$.

    If $d_G^a([V],[V'])=0$, then $[V]=[V']$ and
    $\Delta_K=\Delta_{K'}$, and the result holds by taking $c=0$.

    If $d_G^a([V],[V'])=1$, then by Corollary~\ref{col:alg} 
    there exists $c\in\Lambda $  such that
$\pm \Delta_{K'}\equiv c\bar{c}\pmod{\Delta_K}$.
\end{proof}

\begin{remark}
It is worth mentioning that
Corollary~\ref{col:alg} implies Kawauchi's result
Theorem~\ref{thm:kawa}.
Note that there are infinitely many knots with trivial Alexander polynomial.
Since $u(K)=1$ is a special case of $u_a(K)=1$,
Corollary~\ref{col:1} implies Theorem~\ref{thm:kawa}.
However, since Corollary~\ref{col:1} follows from Corollary~\ref{col:alg},
we see that Corollary~\ref{col:alg} also implies Theorem~\ref{thm:kawa}.
In Section~5, we will present an example to show the strength of our approach.
Let $K_1$ and $K_2$ be knots
with the same Alexander polynomials
as $3_1$ and $9_{25}$, respectively.
By Lemma~\ref{lem_d}, we have $u_a(K_1)=1$,
so  Corollary~\ref{col:alg} applies to show that $d_G(K_1,K_2)\ge d_G^a([V_1],[V_2])\ge 2$.
However, Theorem~\ref{thm:kawa} does not apply in this case,
because we do not necessarily have $u(K_1)=1$.
\end{remark}

Our next aim is to give new solutions to Jong's problem
by finding Alexander polynomials $\Delta$ and $\Delta'$ such that
$\rho(\Delta,\Delta')= 2$.

\begin{corollary}
    Suppose $V$ and $V'$ are Seifert matrices with Alexander polynomials
    $\Delta_V$ and $\Delta_{V'}$, respectively, such that
    $\Delta_V=h(t+t^{-1})+1-2h$, where $|h|$ is prime or $1$, and 
    $\Delta_{V'}\equiv d \pmod{\Delta_{V}}$ for some $0\neq d\in \Z$. 
    If $u_a([V])=1$
    and if
    $h^2x^2+y^2+(2h-1)xy=\pm d $ does not have an integer solution,
    then the algebraic Gordian distance $d^a_G([V],[V'])> 1$.
    \label{thm_main3}
\end{corollary}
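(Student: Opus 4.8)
The plan is to combine Corollary~\ref{col:alg} with an explicit computation of the quadratic form $c \mapsto c\bar{c}$ on the quotient ring $\Lambda/(\Delta_V)$. First I would argue by contradiction: suppose $d^a_G([V],[V']) = 1$. Since $u_a([V]) = 1$ by hypothesis, Corollary~\ref{col:alg} applies and yields some $c \in \Lambda$ with $\pm\Delta_{V'} \equiv c\bar{c} \pmod{\Delta_V}$. Using $\Delta_{V'} \equiv d \pmod{\Delta_V}$, this becomes $c\bar{c} \equiv \pm d \pmod{\Delta_V}$ for one of the two signs. The goal is then to show this congruence forces the Diophantine equation $h^2x^2 + y^2 + (2h-1)xy = \pm d$ to have an integer solution, contradicting the hypothesis.

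The key step is to understand the ring $R := \Lambda/(\Delta_V)$ where $\Delta_V = h(t+t^{-1}) + 1 - 2h = ht + (1-2h) + ht^{-1}$. Multiplying by $t$, the relation $\Delta_V = 0$ reads $ht^2 + (1-2h)t + h = 0$ in $R$, so $R$ is generated as a $\Z$-module by $1$ and $t$, i.e. $R \cong \Z[t]/(ht^2 + (1-2h)t + h)$; every element of $R$ is represented as $x' + y't$ with $x', y' \in \Z$ (here I would need to be a little careful that the leading coefficient is $h \neq \pm 1$ in general, but since $\Delta_V(1) = 1$ the subring generated by $t$ and $t^{-1}$ still behaves well — in fact $t$ is a unit in $R$ with $t^{-1} = (2h-1 - ht)/h$ formally, and one checks the $\Z$-span of $\{1,t\}$ is a subring containing $t^{-1}$, hence equals $R$; this bookkeeping is where I expect the only real friction). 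The involution $\bar{\cdot}$ sends $t \mapsto t^{-1}$. Writing a general element as $c = x + yt$ modulo $\Delta_V$ (after reducing), I would compute $c\bar{c} = (x+yt)(x+yt^{-1}) = x^2 + xy(t+t^{-1}) + y^2 t t^{-1} = x^2 + y^2 + xy(t+t^{-1})$. Now substitute $t + t^{-1} = (2h-1)/h$ — wait, more robustly, use $h(t+t^{-1}) = 2h - 1 - \Delta_V + \Delta_V$, i.e. $h(t+t^{-1}) \equiv 2h-1 \pmod{\Delta_V}$... actually the cleanest route is to represent $c$ so that $c\bar c$ lands in $\Z$ after reduction. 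Multiplying through by $h$: $h\,c\bar c \equiv hx^2 + hy^2 + (2h-1)xy \pmod{\Delta_V}$; this is an integer, and since $\Delta_V$ has a nonconstant term, an integer congruent to $\pm hd$ mod $\Delta_V$ must equal $\pm hd$ exactly (a nonzero multiple of $\Delta_V$ has $t$-degree $\geq 1$). This gives $hx^2 + hy^2 + (2h-1)xy = \pm hd$, which is not quite the target form $h^2x^2 + y^2 + (2h-1)xy = \pm d$; I would reconcile this by rescaling the variables (replace $x$ by $x/h$ or absorb the factor, using that $|h|$ prime or $1$ lets one control divisibility of $c$'s coefficients by $h$) — this matching of normalizations is a routine but slightly fiddly linear-algebra adjustment, and is the second place the argument needs care.

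The main obstacle I anticipate is therefore not the conceptual step (which is just applying Corollary~\ref{col:alg}) but the careful identification of the ring $\Lambda/(\Delta_V)$ and its norm form with the stated quadratic form $h^2x^2 + y^2 + (2h-1)xy$, including tracking the exact normalization of variables and the role of the hypothesis that $|h|$ is prime or $1$. Once the norm form is pinned down, the conclusion "an integer equals $\pm d$ times a unit, hence the equation has an integer solution" follows because reduction modulo $\Delta_V$ is injective on constants. I would close by noting this contradicts the assumption that $h^2x^2 + y^2 + (2h-1)xy = \pm d$ has no integer solution, so $d^a_G([V],[V']) \neq 1$; combined with $d^a_G([V],[V']) \neq 0$ (as $\Delta_{V'} \equiv d \not\equiv 0 \not\equiv 1$ would distinguish the classes, or more simply $d \neq \Delta_V(1)$-type argument), we get $d^a_G([V],[V']) > 1$.
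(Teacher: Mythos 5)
Your overall strategy (argue by contradiction, invoke Corollary~\ref{col:alg} to get $c\bar c\equiv\pm d\pmod{\Delta_V}$, and then analyze the norm form on $\Lambda/(\Delta_V)$) is the same as the paper's, and your endgame is fine: once $c$ is known to be congruent to $pt^{j+1}+qt^j$ with $p,q\in\Z$, the relation $h(t+t^{-1})\equiv 2h-1$ together with $h\mid pq$ (from $\gcd(h,2h-1)=1$ and $h$ prime) does produce an integer solution of $h^2x^2+y^2+(2h-1)xy=\pm d$. But the step you flagged as "bookkeeping" is a genuine gap, not friction. For $|h|>1$ the $\Z$-span of $\{1,t\}$ in $\Lambda/(\Delta_V)$ is \emph{not} all of $R$ and is not even a subring containing $t^{-1}$: since $\Delta_V\cdot t=ht^2+(1-2h)t+h$ has leading coefficient $h$, one only gets $ht^2\equiv(2h-1)t-h$ and $ht^{-1}\equiv(2h-1)-ht$, so $t^2$ and $t^{-1}$ are congruent to elements of $\frac{1}{h}(\Z+\Z t)$ but not of $\Z+\Z t$ (e.g.\ for $h=2$, $t^{-1}-a-bt=f\Delta_V$ forces $f=1/2\notin\Lambda$). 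In fact $\Lambda/(\Delta_V)$ is not finitely generated as a $\Z$-module when $|h|>1$, so there is no integral normal form $c\equiv x+yt$ for arbitrary $c$, and your computation of $c\bar c$ cannot get started except in the case $h=\pm1$.

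The paper closes exactly this hole by using the hypothesis on $c$ rather than a structure theorem for $R$: writing $c=\sum_{-n\le i\le m}a_it^i$, the top-degree coefficient of $c\bar c$ is $a_{-n}a_m t^{m+n}$, and for $c\bar c-(\pm d)$ to be a $\Lambda$-multiple of $\Delta_V$ (whose extreme coefficients are $h$) one needs $h\mid a_{-n}a_m$; primality of $|h|$ then gives $h\mid a_m$ or $h\mid a_{-n}$, which is precisely the divisibility needed to reduce the degree span of $c$ by one modulo $\Delta_V$. Iterating this (re-deriving the divisibility at each stage) brings $c$ to the two-term form $pt^{j+1}+qt^j$, after which your calculation applies. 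So the missing idea is to extract the $h$-divisibility of the extremal coefficients from the congruence $c\bar c\equiv\pm d$ itself; without it, the reduction to a binary quadratic form fails for every $h$ with $|h|>1$, which includes all the cases $h\in\{2,3,5\}$ the paper cares about.
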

\begin{proof}
    Seeking a contradiction, suppose $d^a_G([V],[V'])=1$.
    By Corollary~\ref{col:1}, there exists $c\in\Lambda$
    such that $c\bar{c}\equiv \pm \Delta_{V'}\equiv \pm d\pmod{\Delta_V}$.
    Let
    $$
c=\sum_{-n\le i\le m}a_it^i
    \text{~~~~~~~~and~~~~~~~~}
    \bar{c}=\sum_{-m\le i\le n}a_{-i}t^i,
    $$
    which gives
    $$
    c\bar{c}=
    a_{-n}a_{m}t^{m+n}+\dots+a_{m}a_{-n}t^{-(m+n)}
    .
    $$

    If $c$ can be expressed as $c= pt^{j+1}+qt^j$, where $p$ and $q$ are integers,
     we have $ (p^2+q^2) + pq(t+t^{-1})\equiv \pm d\pmod{\Delta_V} $.
     Since $|h|$ is prime or $1$,  either $h|p$ or $h|q$.
     Without loss of generality, we may assume $p=hx$.
By substituting $p$, we obtain that $ h^2x^2+q^2+(2h-1)xq=\pm d $.
If it does not have an integer solution, the algebraic Gordian distance 
must be greater than one.

    If $c$ has more than two terms,
    we have
    $h|a_{-n}a_{m}$ follows from
    $c\bar{c}\equiv\pm d\pmod{\Delta_V}$,
    so
    either $h|a_{-n}$ or $h|a_{m}$.
    Hence we obtain
    $$\ds
    c\equiv\sum_{1-n\le i\le m}a'_it^i     \text{~~~~~or~~~~~}
    \sum_{-n\le i\le m-1}a'_it^i\pmod{\Delta_V}
    ,$$
    where $\left\{ a'_i \right\}$ are integer coefficients.
    Repeat this step until we deduce $c\equiv pt^{j+1}+qt^j\pmod{\Delta_V}$, where 
    $p$ and $q$ are integers.
    The rest of the proof follows in the same manner.
    \end{proof}
The following corollary is an immediate consequence.

\begin{corollary}
    The Alexander polynomial distance
    $\rho(t-1+t^{-1},\Delta)=2$ if
$\Delta\equiv  2+4m \pmod{t-1+t^{-1}}$
for some $m\in \Z$.
    \label{thm_main4}
\end{corollary}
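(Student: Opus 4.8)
The plan is to derive Corollary~\ref{thm_main4} directly from Corollary~\ref{thm_main3} by making the right choice of parameters. Set $\Delta_V = t - 1 + t^{-1}$, which is the Alexander polynomial of the trefoil $3_1$ and has the form $h(t+t^{-1}) + 1 - 2h$ with $h = 1$. Since $h = 1$ is admissible (it is the ``or $1$'' case), and since by Lemma~\ref{lem_d} every Seifert matrix $V$ with $\Delta_V = t - 1 + t^{-1}$ has $u_a(V) = 1$, both hypotheses $|h|$ prime-or-$1$ and $u_a([V]) = 1$ are automatically satisfied. Now take $V'$ to be any Seifert matrix with $\Delta_{V'} = \Delta$ satisfying $\Delta \equiv 2 + 4m \pmod{t - 1 + t^{-1}}$, so that in the notation of Corollary~\ref{thm_main3} we have $d = 2 + 4m$.

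The next step is to show that with $h = 1$ the Diophantine equation $h^2 x^2 + y^2 + (2h-1)xy = \pm d$ becomes $x^2 + xy + y^2 = \pm(2 + 4m)$, and to check that this has no integer solutions. The key observation is a parity/mod-$4$ argument: the quadratic form $x^2 + xy + y^2$ never represents a number that is $2 \pmod 4$. Indeed, examining the residues of $x^2 + xy + y^2$ modulo $4$ according to the parities of $x$ and $y$ (both even gives $0$; one even, one odd gives $1$ or a value $\equiv 0,1$; both odd gives $1 + 1 + 1 = 3$), one finds that $x^2 + xy + y^2 \bmod 4 \in \{0, 1, 3\}$ and is never $2$. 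Since $2 + 4m \equiv 2 \pmod 4$ and $-(2+4m) \equiv 2 \pmod 4$ as well, neither $+d$ nor $-d$ is attainable, so the equation has no integer solution.

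With the Diophantine obstruction verified, Corollary~\ref{thm_main3} applies and yields $d_G^a([V], [V']) > 1$ for every such pair of Seifert matrices; in particular $d_G^a([V],[V']) = 2$ since algebraic Gordian distance between distinct classes is at most... actually, more carefully, Lemma~\ref{lem_e} gives $d_G^a([V_1],[V_2]) \ge \rho(\Delta_{K_1}, \Delta_{K_2})$, and conversely $\rho \le 2$ always, so combining $d_G^a > 1$ with $\rho \le 2$ and the fact that $d_G^a \ge \rho$ is the wrong direction — instead I should argue directly: for \emph{any} knots $K, K'$ realizing $\Delta$ and $t - 1 + t^{-1}$, Lemma~\ref{lem_mura2} gives $d_G(K,K') \ge d_G^a([V],[V'])$, and since the trefoil's polynomial forces $u_a = 1$, Corollary~\ref{thm_main3} forbids $d_G^a([V],[V']) = 1$; combined with $d_G^a \ne 0$ (the polynomials differ), we get $d_G(K,K') \ge 2$ for all such $K, K'$, hence $\rho(t - 1 + t^{-1}, \Delta) \ge 2$, and equality follows from the universal upper bound $\rho \le 2$.

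The main obstacle is a bookkeeping one rather than a deep one: I must be careful that the hypothesis $\Delta_{V'} \equiv d \pmod{\Delta_V}$ in Corollary~\ref{thm_main3} is exactly the hypothesis $\Delta \equiv 2 + 4m \pmod{t - 1 + t^{-1}}$ here, with $d = 2 + 4m \ne 0$ (note $2 + 4m$ is never zero, so the $0 \ne d$ requirement holds). One should also double-check that $h = 1$ genuinely lies in the scope of Corollary~\ref{thm_main3}, which it does by the phrasing ``$|h|$ is prime or $1$''; and one should confirm that the reduction $h^2 x^2 + y^2 + (2h-1)xy$ at $h=1$ is the standard form $x^2 + xy + y^2$, whose representation theory (it represents exactly $0$, $1$, $3$, $4$, $7$, $\ldots$ — precisely integers $n \ge 0$ with no prime $\equiv 2 \pmod 3$ appearing to an odd power, and in particular nothing $\equiv 2 \pmod 4$ among the relevant small residues) makes the mod-$4$ exclusion immediate.
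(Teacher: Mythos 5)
Your proposal is correct and follows essentially the same route as the paper: invoke Lemma~\ref{lem_d} to get $u_a=1$ for any Seifert matrix realizing $t-1+t^{-1}$, apply Corollary~\ref{thm_main3} with $h=1$, and rule out integer solutions of $x^2+xy+y^2=\pm(2+4m)$ by the observation that this form is either odd or $\equiv 0 \pmod 4$, never $\equiv 2 \pmod 4$. Your explicit check of both signs $\pm d$ and the explicit passage from ``no pair of Seifert matrices at algebraic Gordian distance one'' to $\rho\ge 2$ via Lemmas~\ref{lem_mura2} and~\ref{lem_e} are details the paper leaves implicit, but the argument is the same.
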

    \begin{proof}
        By Lemma~\ref{lem_d}, any knot with
        Alexander polynomial $t-1+t^{-1}$ has algebraic unknotting number one.
        By Corollary~\ref{thm_main3}, it suffices to show that
        $x^2+y^2+xy= 2+4m$ does not have an integer solution.

    Now we check the parities of $x$ and $y$.
            If $x$ and $y$ are 
            both odd
           or one is even and the other is odd, then 
            $x^2+y^2+xy$ is odd, which is a contradiction.
            Otherwise, if $x$ and $y$ are both even, then $x^2+y^2+xy\equiv 0 \pmod{4}$,
which is also a contradiction.
Hence the proof is complete.
\end{proof}

\begin{remark}
There are other applications of Corollary~\ref{thm_main3}.
For example, by Lemma~\ref{lem_d},  Corollary~\ref{thm_main3} gives the same result for
$\rho(\Delta,ht+ht^{-1}+1-2h)=2$ with $h\in \{1,2,3,5\}$ provided that
 $h^2x^2+y^2+(2h-1)xy=\pm d $ does not have an integer solution.
\end{remark}

\begin{remark}
Corollary~\ref{thm_main4}
offers another route to answer Nakanishi's question \cite{naka}*{p.334}, which asks if 
$\rho(\Delta_{3_1},\Delta_{4_1})= 2$.
Moreover,
it implies that any
two Seifert matrices with
Alexander polynomials same as $3_1$ and $4_1$, respectively, cannot be
turned into each other by one algebraic unknotting operation.
\end{remark}

\section{Example}
Moon computed the Gordian distances between many knots, which he listed in a table.
By Moon's results, the lower bound of $d_G(3_1,9_{25})$ is one.
By our method, we now prove that any pair of knots with 
the same Alexander polynomial as $3_1$ and $9_{25}$, respectively, cannot have 
Gordian distance one.
Therefore, the inequality $d_G(3_1,9_{25})\ge 2$ holds. 
Moreover, we will deduce that the algebraic Gordian distance $d^a_G([V_1],[V_2])=2$,
where $V_1$ and $V_2$ are Seifert matrices for $3_1$ and $9_{25},$ respectively.
\begin{figure}[h]
    \hspace{0.05\textwidth}
    \includegraphics[width=0.25\textwidth]{./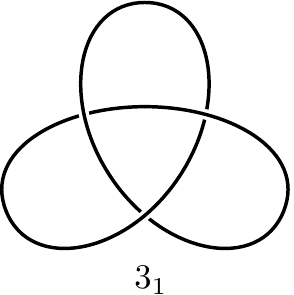}
    \hspace{0.2\textwidth}
    \includegraphics[width=0.28\textwidth]{./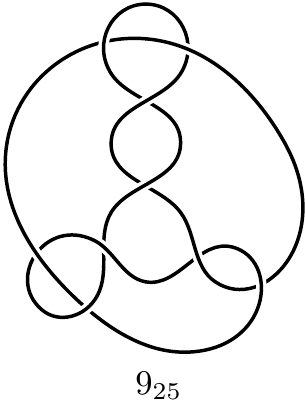}
    \caption{Knot diagrams for $3_1$ and $9_{25}$.}
    
    \label{fig:knots}
\end{figure}

Consider the knot diagrams for $3_1$ and $9_{25}$ in Figure \ref{fig:knots}.
From \cite{knotinfo} we have 
\begin{align*}
\hspace{0.1\textwidth}
\Delta_{3_1}&=t+t^{-1}-1
\hspace{0.05\textwidth}
&\Delta_{9_{25}}&=-3 t^2-3t^{-2}+12 t+12t^{-1}-17
\\
\hspace{0.1\textwidth}
\sigma(3_1)&=-2 
\hspace{0.05\textwidth}
&\sigma(9_{25})&=-2 
\\
\hspace{0.1\textwidth}
D(3_1)&=3 
\hspace{0.05\textwidth}
&D(9_{25})&=47 
    \end{align*}

Since $\Delta_{9_{25}}= (-3t+9-3t^{-1}) \Delta_{3_1}-2
$,
Corollary~\ref{thm_main4} applies to show that
$\rho(\Delta_{3_1},\Delta_{9_{25}})=2$. 
From Lemma~\ref{lem_e},
we have
$$ d_G(K_1,K_2)\ge d_G^a([ K_{1} ],[ K_{2} ])\ge
\rho(\Delta_{3_1},\Delta_{9_{25}})=2$$
for any pair of knots $K_1$ and $K_2$ with
$\Delta_{K_{1}}=\Delta_{3_{1}}$ and $\Delta_{K_{2}}=\Delta_{9_{25}}$.

Note that $d_G(3_1,9_{25})\ge 2$ can also be proved by Kawauchi's Theorem~\ref{thm:kawa}, which
is a result of Kawauchi. This is because $3_1$ satisfies the geometric assumption
that $u(3_1)=1$. However, for any pair of knots $K_1$ and $K_2$ with 
the same Alexander polynomial as $3_1$ and $9_{25}$ respectively, 
Theorem~\ref{thm:kawa} does not apply because we may not have that $u(K_i)=1$ for
$i=1,2$. For example, $3_1\#11n_{34}$ and $9_{25}$.


Moreover, this example demonstrates how our result helps in calculating the algebraic Gordian
distance of  two given $S$-equivalent classes. 
We know $u_a(9_{25})=u_a(3_1)=1$; see \cites{knotorious}. It gives that
$d_G^a([V_1],[V_2])\le u_a(3_1)+u_a(9_{25})=2$, where $V_1$ and $V_2$ are Seifert matrices for
$3_1$ and $9_{25},$ respectively.
Therefore, we have
$d_G^a([V_1],[V_2])=2$.

It is worth mentioning that Murakami's method
does not apply here.
Following Murakami's method, we would have to prove there does not exist an integer $d$
such that $$\frac{2d^2}{D(3_1)}\equiv\pm\frac{D(3_1)-D(9_{25})}{2D(3_1)}\pmod{1}.$$
In fact, any integer
$d$, such that $d\not\equiv 0\pmod{3}$,
satisfies this requirement, which means Murakami's method does not 
work in this case.

Meanwhile, the knot signature criterion fails as well in 
this case.
Since $\sigma(3_1)=\sigma(9_{25})=-2$,
the signature criterion cannot determine whether 
$d_G(K,K')$ is one or not.


\section*{Acknowledgements}
    The author wishes to give her many thanks to Prof. Hans Boden,  Prof. Akio Kawauchi
    and Prof. Hitoshi Murakami, for their valuable suggestion.

\begin{bibdiv}
\begin{biblist}

\bib{blanch}{article}{
    AUTHOR = {Blanchfield, R. C.},
     TITLE = {Intersection theory of manifolds with operators with
              applications to knot theory},
   JOURNAL = {Ann. of Math. (2)},
    VOLUME = {65},
      YEAR = {1957},
     PAGES = {340--356},
      ISSN = {0003-486X},
}

\bib{knotorious}{webpage}{
    AUTHOR = {Borodzik, M.},
    AUTHOR = {Friedl, S.},
     TITLE = {Knotorious},
       URL = {https://www.mimuw.edu.pl/~mcboro/knotorious.php},
       Date = {2017, July 18},
}

\bib{knotinfo}{webpage}{
    AUTHOR = {Cha, J. C.},
    AUTHOR = {Livingston, C.},
     TITLE = {KnotInfo: Table of Knot Invariants},
       URL = {http://www.indiana.edu/~knotinfo},
       Date = {2017, July 18},
}

\bib{fogel}{thesis}{
    author={Fogel, M. E.},
   title={Knots with algebraic unknotting number one},
   journal={Pacific J. Math.},
   volume={163},
   date={1994},
   number={2},
   pages={277--295},
   issn={0030-8730}
}

\bib{friedl}{article}{
    author={Friedl, S.},
    author={Powell, M.},
   title={A calculation of Blanchfield pairings of 3-manifolds and knots},
   journal={Mosc. Math. J.},
   volume={17},
   date={2017},
   number={1},
   pages={59--77},
   issn={1609-3321},
}


\bib{jong3}{article}{
    AUTHOR = {Ichihara, K.},
    AUTHOR = {Jong, I. D.},
     TITLE = {Gromov hyperbolicity and a variation of the {G}ordian complex},
   JOURNAL = {Proc. Japan Acad. Ser. A Math. Sci.},
    VOLUME = {87},
      YEAR = {2011},
    NUMBER = {2},
     PAGES = {17--21},
      ISSN = {0386-2194},
}
\bib{jong1}{article}{
    AUTHOR = {Jong, I. D.},
     TITLE = {Alexander polynomials of alternating knots of genus two},
   JOURNAL = {Osaka J. Math.},
    VOLUME = {46},
      YEAR = {2009},
    NUMBER = {2},
     PAGES = {353--371},
      ISSN = {0030-6126},
}
\bib{jong2}{article}{
    AUTHOR = {Jong, I. D.},
     TITLE = {Alexander polynomials of alternating knots of genus two {II}},
   JOURNAL = {J. Knot Theory Ramifications},
    VOLUME = {19},
      YEAR = {2010},
    NUMBER = {8},
     PAGES = {1075--1092},
      ISSN = {0218-2165},
}
\bib{kawa12}{article}{
    AUTHOR = {Kawauchi, A.},
     TITLE = {On the {A}lexander polynomials of knots with {G}ordian
              distance one},
   JOURNAL = {Topology Appl.},
    VOLUME = {159},
      YEAR = {2012},
    NUMBER = {4},
     PAGES = {948--958},
      ISSN = {0166-8641},
}
\bib{kearton}{article}{
    AUTHOR = {Kearton, C.},
     TITLE = {Cobordism of knots and {B}lanchfield duality},
   JOURNAL = {J. London Math. Soc. (2)},
    VOLUME = {10},
      YEAR = {1975},
    NUMBER = {4},
     PAGES = {406--408},
      ISSN = {0024-6107},
}

\bib{kondo}{article}{
    author={Kondo, H.},
   title={Knots of unknotting number $1$\ and their Alexander polynomials},
   journal={Osaka J. Math.},
   volume={16},
   date={1979},
   number={2},
   pages={551--559},
   issn={0030-6126},
}

\bib{levine}{article}{
   author={Levine, J.},
   title={The role of the Seifert matrix in knot theory},
   conference={
      title={Actes du Congr\`es International des Math\'ematiciens},
      address={Nice},
      date={1970},
   },
   book={
      publisher={Gauthier-Villars, Paris},
   },
   date={1971},
   pages={95--98},
}
\bib{lick}{article}{,
    AUTHOR = {Lickorish, W. B. R.},
     TITLE = {The unknotting number of a classical knot},
   Journal = {Contemp. Math.},
    VOLUME = {44},
     PAGES = {117--121},
      YEAR = {1985},
}

\bib{moon}{thesis}{
    AUTHOR = {Moon, H.},
     TITLE = {Calculating knot distances and solving tangle equations
              involving montesinos links},
      NOTE = {Thesis (Ph.D.)--The University of Iowa},
 PUBLISHER = {ProQuest LLC, Ann Arbor, MI},
      YEAR = {2010},
     PAGES = {154},
      ISBN = {978-1124-44226-6},
}
\bib{mura85}{article}{
    AUTHOR = {Murakami, H.},
     TITLE = {Some metrics on classical knots},
   JOURNAL = {Math. Ann.},
    VOLUME = {270},
      YEAR = {1985},
    NUMBER = {1},
     PAGES = {35--45},
      ISSN = {0025-5831},
}
\bib{mura90}{article}{
    AUTHOR = {Murakami, H.},
     TITLE = {Algebraic unknotting operation},
   JOURNAL = {Questions Answers Gen. Topology},
    VOLUME = {8},
      YEAR = {1990},
    NUMBER = {1},
     PAGES = {283--292},
      ISSN = {0918-4732},
}

\bib{murasugi}{article}{
    author={Murasugi, K.},
   title={On the signature of links},
   journal={Topology},
   volume={9},
   date={1970},
   pages={283--298},
   issn={0040-9383},
}
\bib{naka}{article}{
    AUTHOR = {Nakanishi, Y.},
     TITLE = {Local moves and {G}ordian complexes. {II}},
   JOURNAL = {Kyungpook Math. J.},
    VOLUME = {47},
      YEAR = {2007},
    NUMBER = {3},
     PAGES = {329--334},
      ISSN = {1225-6951},
}


\bib{saeki}{article}{
    AUTHOR = {Saeki, O.},
     TITLE = {On algebraic unknotting numbers of knots},
   JOURNAL = {Tokyo J. Math.},
    VOLUME = {22},
      YEAR = {1999},
    NUMBER = {2},
     PAGES = {425--443},
      ISSN = {0387-3870},
}

\bib{seifert-a}{article}{
    author={Seifert, H.},
       title={\"{U}ber das {G}eschlecht von {K}noten},
        date={1935},
        ISSN={0025-5831},
     journal={Math. Ann.},
      volume={110},
      number={1},
       pages={571\ndash 592},
         url={https://doi.org/10.1007/BF01448044},
}

\bib{seifert}{article}{
    AUTHOR = {Seifert, H.},
     TITLE = {Die Verschlingungsinvarianten der zyklischen
              {K}noten\"uberlagerungen},
   JOURNAL = {Abh. Math. Sem. Univ. Hamburg},
    VOLUME = {11},
      YEAR = {1935},
    NUMBER = {1},
     PAGES = {84--101},
      ISSN = {0025-5858},
}

\bib{trotter62}{article}{
    AUTHOR = {Trotter, H. F.},
     TITLE = {Homology of group systems with applications to knot theory},
   JOURNAL = {Ann. of Math. (2)},
    VOLUME = {76},
      YEAR = {1962},
     PAGES = {464--498},
      ISSN = {0003-486X},
}

\bib{trotter73}{article}{
    AUTHOR = {Trotter, H. F.},
     TITLE = {On {$S$}-equivalence of {S}eifert matrices},
   JOURNAL = {Invent. Math.},
    VOLUME = {20},
      YEAR = {1973},
     PAGES = {173--207},
      ISSN = {0020-9910},
}

\end{biblist}
\end{bibdiv}

\Addresses
\end{document}